\newtheorem{theorem}{Theorem}[section]
\newtheorem{lemma}[theorem]{Lemma}
\theoremstyle{definition}
\newtheorem{remark}[theorem]{Remark}
\numberwithin{equation}{section}
\title[Critical Choquard equations]
      {Groundstates for Choquard equations with the upper critical exponent}
\author[Xinfu Li, Shiwang Ma]{}
 \keywords{critical Choquard equation; positive solution; Poho\v{z}aev
identity; groundstate solution}
\thanks{Email Address:  lxylxf@tjcu.edu.cn (XL); shiwangm@nankai.edu.cn
(SM)}
\begin{document}
\maketitle

\centerline{\scshape Xinfu Li$^\mathrm{a}$ and  Shiwang
Ma$^\mathrm{b}$}
\medskip
{\footnotesize
 \centerline{$^\mathrm{a}$School of Science, Tianjin University of Commerce, Tianjin 300134,
P. R. China} \centerline{$^\mathrm{b}$School of Mathematical
Sciences and LPMC, Nankai University, Tianjin 300071, China}}

\bigskip

\begin{abstract}
In this paper, an autonomous Choquard equation with the upper
critical exponent is considered. By using the Poho\v{z}aev
constraint method, the subcritical approximation method and the
compactness lemma of Strauss, a groundstate solution in
$H^1(\mathbb{R}^N)$ which is positive and radially symmetric is
obtained. The result here extends and complements the earlier
theorems.
\end{abstract}

\section{Introduction and main results}

\setcounter{section}{1}
\setcounter{equation}{0}

Let $N\geq 3$ and  $\alpha\in(0,N)$. We are interested in the
autonomous Choquard equation
\begin{equation}\label{e1.1}
-\Delta u+u=(I_{\alpha}\ast G(u))g(u),\quad \mathrm{in}\
\mathbb{R}^N,
\end{equation}
where $g\in C(\mathbb{R},\mathbb{R})$, $G(s)=\int_0^sg(t)dt$, and
$I_{\alpha}$ is the Riesz potential defined for every $x\in
\mathbb{R}^N \setminus \{0\}$ by
\begin{equation}\label{e1.37}
I_{\alpha}(x)=\frac{A_\alpha(N)}{|x|^{N-\alpha}}
\end{equation}
with
$A_\alpha(N)=\frac{\Gamma(\frac{N-\alpha}{2})}{\Gamma(\frac{\alpha}{2})\pi^{N/2}2^\alpha}$
and $\Gamma$ denoting the Gamma function (see \cite{Riesz1949AM},
P.19).

For $G(u)=|u|^p/p^{1/2}$, (\ref{e1.1}) is reduced to the special
equation
\begin{equation}\label{e1.32}
-\Delta u+ u=(I_{\alpha}\ast|u|^{p})|u|^{p-2}u \quad \mathrm{in}\
\mathbb{R}^N.
\end{equation}
When $N=3,\  p=2$ and $\alpha=2$, (\ref{e1.32}) was investigated by
Pekar in \cite{Pekar 1954} to study the quantum theory of a polaron
at rest. In \cite{Lieb 1977}, Choquard applied it as an
approximation to Hartree-Fock theory of one component plasma. It
also arises in multiple particles systems \cite{Gross 1996} and
quantum mechanics \cite{Penrose 1996}. There are many papers devoted
to the existence and  multiplicity of solutions of (\ref{e1.32}) and
their qualitative properties. See the survey paper
\cite{Moroz-Schaftingen 2017} and the references therein. For  $p\in
\left(\frac{N+\alpha}{N},\frac{N+\alpha}{N-2}\right)$, Moroz and
Schaftingen \cite{Moroz-Schaftingen JFA 2013} established the
existence, qualitative properties and decay estimates of
groundstates of (\ref{e1.32}). They also obtain some nonexistence
results under the range
\begin{equation*}
p\geq \frac{N+\alpha }{N-2}\ \text{ or }\ p\leq \frac{N+\alpha }{N}.
\end{equation*}
Usually, $\frac{N+\alpha}{N}$ is called the lower critical exponent
and $\frac{N+\alpha }{N-2}$ is the upper critical exponent for the
Choquard equation.

Compared with (\ref{e1.32}), the nonhomogeneity of the general
nonlinearity $G(s)$ in (\ref{e1.1}) causes much more difficulties.
In spite of this, Moroz and Schaftingen in \cite{Moroz-Schaftingen
2015} made a great progress on equation (\ref{e1.1}) for the
subcritical case. In the spirit of Berestycki and Lions, they
obtained the existence of groundstates  with sufficient and almost
necessary conditions on the nonlinearity $g$. More precisely, they
made the following
assumptions:\\
(g1) There exists $C>0$ such that for every $s\in \mathbb{R}$,
$|sg(s)|\leq  C \left(|s|^{\frac{N+\alpha}{N}}+
|s|^{\frac{N+\alpha}{N-2}}\right)$.\\
(g2) $\lim_{s\to 0}\frac{G(s)}{|s|^{\frac{N+\alpha}{N}}}=0$ and
$\lim_{|s|\to \infty}\frac{G(s)}{|s|^{\frac{N+\alpha}{N-2}}}=0$.\\
(g3) There exists $s_0\in \mathbb{R}\setminus\{0\}$ such that
$G(s_0)\neq0$.\\
(g4) $g$ is odd and has constant sign on $(0,\infty)$.

The main results obtained in \cite{Moroz-Schaftingen 2015} are as
follows.

\smallskip

\textbf{Theorem A1}.  Assume that (g1)-(g3) hold. Then (\ref{e1.1})
has a groundstate in $H^1(\mathbb{R}^N)$. Furthermore, assume that
(g4) holds.  Then (\ref{e1.1}) has a positive groundstate which is
radially symmetric and radially nonincreasing.

\smallskip

\textbf{Theorem A2}.  Assume that (g1) holds. Then every solution
$u\in H^1(\mathbb{R}^N)$ to (\ref{e1.1}) satisfies the Poho\v{z}aev
identity
\begin{equation*}
\frac{N-2}{2}\int_{\mathbb{R}^N}|\nabla u|^2
+\frac{N}{2}\int_{\mathbb{R}^N}|u|^2=\frac{N+\alpha}{2}\int_{\mathbb{R}^N}\left
(I_{\alpha}\ast G(u)\right)G(u).
\end{equation*}

\smallskip

The results in \cite{Moroz-Schaftingen 2015}  can be seen as a
counterpart for Choquard-type equations of the result of Berestycki
and Lions \cite{Berestycki-Lions 1983} which give similar ``almost
necessary" conditions for the existence of a groundstate to the
Schr\"{o}dinger equation
\begin{equation}\label{e1.2}
-\Delta u=h(u),\quad x\in \mathbb{R}^N
\end{equation}
in the subcritical case. We refer the readers to
\cite{Alves-Souto-Montenegro 2012} \cite{Liu-Liao-Tang 2017}
 \cite{Zhang-Zou 2012} \cite{Zhang-Zou 2014} for similar results of
(\ref{e1.2}) in the critical case.

Recently, many authors considered the general Choquard equation
(\ref{e1.1}) in the critical case, see
\cite{Alves-Gao-Squassina-Yang} \cite{Cassani-Zhang 2016} and
\cite{Seok 2018}.

In \cite{Cassani-Zhang 2016},  Cassani and Zhang considered
(\ref{e1.1}) under the following assumptions on $g\in
C(\mathbb{R}^+,\mathbb{R})$:\\
(g5) $\lim_{t\to 0^+}g(t)/t=0$.\\
(g6) $\lim_{t\to +\infty}g(t)/t^{\frac{2+\alpha}{N-2}}=1$.\\
(g7) There exists $\mu>0$ and $q\in (2,(N+\alpha)/(N-2))$ such that
\begin{equation*}
g(t)\geq t^{(2+\alpha)/(N-2)}+\mu t^{q-1},\ t>0.
\end{equation*}

By using a monotonicity trick due to Jeanjean \cite{Jeanjean 1999}
and a suitable decomposition of Palais-Smale sequences, they
obtained the following result.
\smallskip

\textbf{Theorem A3}. Let $N\geq 3$, $\alpha\in ((N-4)_+,N)$ and
\begin{equation}\label{e1.33}
q>\max\{1+\frac{\alpha}{N-2},\frac{N+\alpha}{2(N-2)}\},
\end{equation}
and assume that conditions (g5)-(g7) hold. Then (\ref{e1.1}) admits
a groundstate solution in $H^1(\mathbb{R}^N)$.
\smallskip

For $N=3,\ G(s)=|s|^{3+\alpha}+W(s)$, Alves et al.
\cite{Alves-Gao-Squassina-Yang} considered (\ref{e1.1}) under the
following assumptions on $w\in C(\mathbb{R}^+,\mathbb{R})$, where
$W(s)=\int_0^sw(t)dt$.\\
(w1) There exist $p,\ q,\ \varsigma$ with
\begin{equation*}
(3+\alpha)/3<q\leq p<3+\alpha,\quad 2+\alpha<\varsigma<3+\alpha,
\end{equation*}
and $c_0,\ c_1>0$ such that for all $s\in \mathbb{R}$,
\begin{equation*}
|w(s)|\leq c_0\left(|s|^{q-1}+|s|^{p-1}\right)\quad
\mathrm{and}\quad W(s)\geq c_1|s|^{\varsigma}.
\end{equation*}
(w2) There exists $\mu>2$ such that
\begin{equation*}
0<\mu W(s)\leq 2w(s)s,\quad s\in \mathbb{R}^+.
\end{equation*}
(w3) $w(s)$ is strictly increasing on $\mathbb{R}^+$.

By using Nehari manifold method, they obtained the following result.
\smallskip

\textbf{Theorem A4}. Let $N=3$ and assume that conditions (w1)-(w3)
hold. Then (\ref{e1.1}) admits a groundstate solution in
$H^1(\mathbb{R}^N)$.

\smallskip

Inspired by \cite{Liu-Liao-Tang 2017}, in this paper, we consider
(\ref{e1.1}) with the upper critical exponent. More precisely, we
consider
\begin{equation}\label{e1.16}
-\Delta u+\kappa u=\left(I_{\alpha}\ast
(\mu|u|^{p^*}+F(u))\right)(\mu p^*|u|^{p^*-2}u+f(u)),\quad
\mathrm{in}\ \mathbb{R}^N,
\end{equation}
where $N\geq 3$, $p^*=\frac{N+\alpha}{N-2}$, $\kappa$ and $\mu$ are
positive constants, $f\in C(\mathbb{R},\mathbb{R})$ and
$F(s)=\int_0^sf(t)dt$
satisfy the following conditions:\\
(f1) $f(s)$ is odd and $f(s)\geq 0$ for every $s\geq 0$.\\
(f2) There exists $C>0$ such that for every $s\in \mathbb{R}$,
$|f(s)|\leq  C \left(|s|^{\frac{N+\alpha}{N}-1}+
|s|^{\frac{N+\alpha}{N-2}-1}\right)$.\\
(f3) $\lim_{|s|\to 0}\frac{F(s)}{|s|^{\frac{N+\alpha}{N}}}=0$ and
$\lim_{|s|\to \infty}\frac{F(s)}{|s|^{\frac{N+\alpha}{N-2}}}=0$.\\
(f4)
 \begin{equation*} \left\{\begin{array}{ll}
\lim_{|s|\to \infty}\frac{F(s)}{s^{\frac{N+\alpha-4}{N-2}}}=+\infty,& N\geq 5,\\
\lim_{|s|\to \infty}\frac{F(s)}{s^{\frac{\alpha}{N-2}}|\ln s|}=+\infty,& N=4,\\
\lim_{|s|\to \infty}\frac{F(s)}{s^{1+\alpha}}=+\infty,& N=3.
\end{array}\right.
\end{equation*}

The main result of this paper is as follows.

\begin{theorem}\label{thm1.1}
Assume that $N\geq 3$, $\alpha\in (0,N)$ and (f1)-(f4) hold. Then
for every $\kappa,\ \mu>0$, equation (\ref{e1.16}) admits a positive
groundstate solution  $u\in H^1(\mathbb{R}^N)$ which is radially
symmetric.
\end{theorem}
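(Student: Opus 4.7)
The plan is to combine the Poho\v{z}aev constraint method with a subcritical approximation, in the spirit of \cite{Liu-Liao-Tang 2017} for the local Schr\"odinger equation. Set $H(s):=\mu|s|^{p^*}+F(s)$ and define the energy
\begin{equation*}
I(u)=\tfrac{1}{2}\int_{\mathbb{R}^N}\!\bigl(|\nabla u|^2+\kappa u^2\bigr)-\tfrac{1}{2}\int_{\mathbb{R}^N}\!\bigl(I_\alpha\ast H(u)\bigr)H(u),
\end{equation*}
together with the Poho\v{z}aev functional, whose vanishing is forced on $H^1$-solutions by Theorem A2 applied to $H$,
\begin{equation*}
P(u)=\tfrac{N-2}{2}\int_{\mathbb{R}^N}|\nabla u|^2+\tfrac{N\kappa}{2}\int_{\mathbb{R}^N}u^2-\tfrac{N+\alpha}{2}\int_{\mathbb{R}^N}\bigl(I_\alpha\ast H(u)\bigr)H(u).
\end{equation*}
I take as the candidate groundstate level
\begin{equation*}
m:=\inf\{I(u):u\in H^1(\mathbb{R}^N)\setminus\{0\},\ P(u)=0\}.
\end{equation*}
A preliminary task is to check that $m>0$, that every nontrivial $u$ can be scaled onto $\{P=0\}$ by a unique dilation $u\mapsto u(\cdot/t)$, and that any minimizer of $m$ is automatically a solution of (\ref{e1.16}).

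Next I approximate the critical exponent from below. For $p\in(\tfrac{N+\alpha}{N},p^*)$ close to $p^*$, replace $\mu|s|^{p^*}$ by $\mu|s|^{p}$ in (\ref{e1.16}). The resulting nonlinearity $G_p(s)=\mu|s|^p+F(s)$ inherits (g1)--(g4) from (f1)--(f3), so Theorem A1 provides a positive, radial, radially nonincreasing groundstate $u_p\in H^1(\mathbb{R}^N)$ with energy $m_p$ satisfying the corresponding Poho\v{z}aev identity $P_p(u_p)=0$. The goal is then to pass to the limit $p\nearrow p^*$.

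The main obstacle is the energy threshold. Let $S$ denote the sharp constant in the Sobolev--Hardy--Littlewood--Sobolev inequality $(\int(I_\alpha\ast|u|^{p^*})|u|^{p^*})^{1/p^*}\leq S^{-1}\int|\nabla u|^2$, attained on the Aubin--Talenti family $U_\varepsilon(x)=\varepsilon^{(N-2)/2}(\varepsilon^2+|x|^2)^{-(N-2)/2}$, and let $m^\ast$ be the associated critical level (an explicit constant depending only on $S$, $N$, $\alpha$, $\mu$). The crux of the argument is
\begin{equation*}
\limsup_{p\nearrow p^*}m_p\leq m<m^\ast.
\end{equation*}
The first inequality is standard, using $U_\varepsilon$ with a smooth cutoff as a test function projected onto $\{P=0\}$. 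The strict inequality $m<m^\ast$ is where assumption (f4) enters decisively: inserting $U_\varepsilon$ into $I$ one must balance an unavoidable loss of order $\varepsilon^{N-2}$ coming from the mass term $\kappa u^2$ against the gain coming from the cross term $\int(I_\alpha\ast F(U_\varepsilon))F(U_\varepsilon)$, and the three growth thresholds in (f4) are calibrated precisely so that this gain is of strictly larger order in each of the regimes $N\geq 5$, $N=4$, and $N=3$. This case-by-case asymptotic estimate, parallel to the Brezis--Nirenberg dichotomy for the local critical problem, is the most delicate part of the proof.

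Finally, the bound $m_p<m^\ast$ combined with $P_p(u_p)=0$ yields a uniform $H^1$ bound on $\{u_p\}$. Radiality and radial monotonicity of the $u_p$ let Strauss' compactness lemma give strong convergence of the nonlocal integrands along a subsequence $u_{p_n}\rightharpoonup u$, so the weak limit $u$ is a distributional solution of (\ref{e1.16}). The strict inequality $m<m^\ast$ rules out concentration of a critical bubble and hence forces $u\not\equiv 0$; weak lower semicontinuity together with Poho\v{z}aev then yield $I(u)\leq\liminf m_p\leq m$, so $u$ is a groundstate. Positivity and radial symmetry are inherited from the approximating sequence, with strict positivity following from the strong maximum principle applied to (\ref{e1.16}).
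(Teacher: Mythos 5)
Your proposal follows the paper's proof essentially verbatim: the Poho\v{z}aev constraint level $m_p$, subcritical approximation using the groundstates from Theorem A1 for $p<p^*$ (which applies because $G_p(s)=\mu|s|^p+F(s)$ inherits (g1)--(g4) from (f1)--(f3)), a strict threshold $m_{p^*}<m^*$ established with cut-off Aubin--Talenti bubbles where (f4) makes the cross term $\int(I_\alpha\ast|u_\varepsilon|^{p^*})F(u_\varepsilon)$ dominate the $\kappa u^2$ loss, and Strauss' compactness plus a concentration-exclusion argument to pass to the limit and obtain a nontrivial minimizer. One small misattribution: the paper does not prove $\limsup_{p\to p^*}m_p\leq m_{p^*}$ by testing with $U_\varepsilon$; it fixes a near-minimizer $u$ for $m_{p^*}$ and shows $\tau\mapsto I_p(u_\tau)$ is continuous in $p$ uniformly on compact $\tau$-intervals, so the scaled $u$ landing on $\{P_p=0\}$ has energy close to $I_{p^*}(u)$, while $U_\varepsilon$ enters only the separate strict upper bound $m_{p^*}<m^*$.
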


\begin{remark}\label{rek1.2}
Obviously, Theorem \ref{thm1.1} generalizes Theorem A3 and A4. The
condition (f4) is used for calculating the energy level (see Lemma
\ref {lem shangjie}).
\end{remark}

To prove Theorem \ref{thm1.1}, we use the Poho\v{z}aev identity to
construct a energy level (see (\ref{e1.35})). By using the results
for the subcritical problems established in \cite{Moroz-Schaftingen
2015} we study the limit of the energy level as the exponent
approaches to the critical exponent, and by employing the
compactness lemma of Strauss we establish existence results for the
critical exponent problems. In the process, a subtle estimate of the
energy level for the critical exponent problems is made.

\smallskip

This paper is organized as follows. In Section 2 we give some
preliminaries. Section 3 is devoted to the proof of Theorem
\ref{thm1.1}.

\smallskip

\textbf{Basic notations}: Throughout this paper, we assume $N\geq
3$. $ C_c^{\infty}(\mathbb{R}^N)$ denotes the space of the functions
infinitely differentiable with compact support in $\mathbb{R}^N$.
$L^r(\mathbb{R}^N)$ with $1\leq r<\infty$ denotes the Lebesgue space
with the norms
$\|u\|_r=\left(\int_{\mathbb{R}^N}|u|^r\right)^{1/r}$.
 $ H^1(\mathbb{R}^N)$ is the usual Sobolev space with norm
$\|u\|_{H^1(\mathbb{R}^N)}=\left(\int_{\mathbb{R}^N}|\nabla
u|^2+|u|^2\right)^{1/2}$. $ D^{1,2}(\mathbb{R}^N)=\{u\in
L^{2N/(N-2)}(\mathbb{R}^N): |\nabla u|\in L^2(\mathbb{R}^N)\}$.

\section{Preliminaries and functional framework}

\setcounter{section}{2}
\setcounter{equation}{0}

The following lemma will be frequently used in this paper.

\begin{lemma}\label{lem1.7}
Let $N\geq 3$,  $q\in [2,2N/(N-2)]$ and $u\in H^1(\mathbb{R}^N)$.
Then there exists a positive constant $C$ independent of $q$ and $u$
such that
\begin{equation*}
\|u\|_q\leq C\|u\|_{H^1(\mathbb{R}^N)}.
\end{equation*}
\end{lemma}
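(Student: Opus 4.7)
The plan is to establish the uniform embedding by interpolating between the two endpoints $q=2$ and $q=2^*:=2N/(N-2)$, where the Sobolev embedding constants are fixed and do not depend on any intermediate exponent.

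First I would recall the two endpoint estimates. At $q=2$ one has trivially $\|u\|_2\leq\|u\|_{H^1(\mathbb{R}^N)}$ from the definition of the $H^1$-norm. At $q=2^*$ the classical Sobolev inequality gives a constant $S>0$ such that $\|u\|_{2^*}\leq S\|\nabla u\|_2\leq S\|u\|_{H^1(\mathbb{R}^N)}$ for every $u\in H^1(\mathbb{R}^N)$. Both constants are absolute (they depend only on $N$), so setting $C:=\max\{1,S\}$ gives a single constant controlling both endpoints.

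Next I would handle an arbitrary $q\in(2,2^*)$ by Hölder interpolation. Choose $\theta\in[0,1]$ with
\begin{equation*}
\frac{1}{q}=\frac{\theta}{2}+\frac{1-\theta}{2^*},
\end{equation*}
which is possible precisely because $q$ lies between the two endpoints. The standard interpolation inequality then yields
\begin{equation*}
\|u\|_q\leq \|u\|_2^{\theta}\,\|u\|_{2^*}^{1-\theta}.
\end{equation*}
Plugging in the two endpoint bounds above, both of which are dominated by $C\|u\|_{H^1(\mathbb{R}^N)}$, one gets
\begin{equation*}
\|u\|_q\leq \bigl(C\|u\|_{H^1(\mathbb{R}^N)}\bigr)^{\theta}\bigl(C\|u\|_{H^1(\mathbb{R}^N)}\bigr)^{1-\theta}=C\|u\|_{H^1(\mathbb{R}^N)},
\end{equation*}
and the constant $C$ depends only on $N$, not on $q$ or $u$.

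There is essentially no obstacle here; the only mild subtlety is making sure that the Hölder-interpolation step is used with the correct convention at the endpoints $\theta=0$ and $\theta=1$ (so that the inequality degenerates to the two endpoint estimates rather than producing a $q$-dependent constant). Once one takes the maximum of $1$ and the Sobolev constant $S$ at the outset, the bound is manifestly uniform in $q\in[2,2^*]$, which is exactly what the lemma asserts.
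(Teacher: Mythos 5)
Your proposal is correct and follows essentially the same argument as the paper: interpolate $\|u\|_q\leq \|u\|_2^{\theta}\|u\|_{2N/(N-2)}^{1-\theta}$ with $\frac{1}{q}=\frac{\theta}{2}+\frac{1-\theta}{2N/(N-2)}$, bound both endpoint norms by the ($q$-independent) Sobolev and trivial embeddings, and take the maximum of the two constants. Nothing further is needed.
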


\begin{proof}
By the H\"{o}lder inequality and the Sobelev imbedding theorem,
\begin{equation*}
\begin{split}
\|u\|_q\leq \|u\|_{2}^\theta\|u\|_{2N/(N-2)}^{1-\theta}&\leq
(C_1\|u\|_{H^1(\mathbb{R}^N)})^\theta(C_2\|u\|_{H^1(\mathbb{R}^N)})^{1-\theta}\\
&\leq \max\{C_1,C_2\}\|u\|_{H^1(\mathbb{R}^N)},
\end{split}
\end{equation*}
where $\frac{1}{q}=\frac{\theta}{2}+\frac{1-\theta}{2N/(N-2)}$. The
proof is complete.
\end{proof}

The following well known Hardy-Littlewood-Sobolev inequality can be
found in \cite{Lieb-Loss 2001}.

\begin{lemma}\label{lem HLS}
Let $p,\ r>1$ and $0<\alpha<N$ with $1/p+(N-\alpha)/N+1/r=2$. Let
$u\in L^p(\mathbb{R}^N)$ and $v\in L^r(\mathbb{R}^N)$. Then there
exists a sharp constant $C(N,\alpha,p)$, independent of $u$ and $v$,
such that
\begin{equation*}
\left|\int_{\mathbb{R}^N}\int_{\mathbb{R}^N}\frac{u(x)v(y)}{|x-y|^{N-\alpha}}\right|\leq
C(N,\alpha,p)\|u\|_p\|v\|_r.
\end{equation*}
If $p=r=\frac{2N}{N+\alpha}$, then
\begin{equation*}
C(N,\alpha,p)=C_\alpha(N)=\pi^{\frac{N-\alpha}{2}}\frac{\Gamma(\frac{\alpha}{2})}{\Gamma(\frac{N+\alpha}{2})}\left\{\frac{\Gamma(\frac{N}{2})}{\Gamma(N)}\right\}^{-\frac{\alpha}{N}}.
\end{equation*}
\end{lemma}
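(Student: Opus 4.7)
The plan is to combine a Pohožaev-manifold variational framework with a subcritical approximation and Strauss's radial compactness lemma. Write $G_\mu(u) = \mu|u|^{p^*} + F(u)$ so that (\ref{e1.16}) has the form $-\Delta u + \kappa u = (I_\alpha \ast G_\mu(u))G_\mu'(u)$, with energy and Pohožaev functionals
\[
J_\mu(u) = \tfrac{1}{2}\!\int |\nabla u|^2 + \tfrac{\kappa}{2}\!\int |u|^2 - \tfrac{1}{2}\!\int (I_\alpha \ast G_\mu(u))G_\mu(u),
\]
\[
P_\mu(u) = \tfrac{N-2}{2}\!\int |\nabla u|^2 + \tfrac{\kappa N}{2}\!\int |u|^2 - \tfrac{N+\alpha}{2}\!\int (I_\alpha \ast G_\mu(u))G_\mu(u).
\]
Let $c_\mu = \inf\{J_\mu(u) : u \neq 0,\ P_\mu(u)=0\}$. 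The scaling $u_t(x)=u(x/t)$ shows that every nontrivial solution lies on $\mathcal{P}_\mu=\{P_\mu=0\}$ and that $J_\mu$ is a positive multiple of $\int|\nabla u|^2+\kappa\int|u|^2$ on $\mathcal{P}_\mu$, so $c_\mu>0$.

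First I would implement the subcritical approximation. For $p\in(1+\alpha/N,p^*)$, replace $\mu|u|^{p^*}$ in $G_\mu$ by $\mu|u|^{p}$. The modified nonlinearity satisfies Moroz--Schaftingen's conditions (g1)--(g4), so Theorem A1 yields a positive, radially symmetric, radially nonincreasing groundstate $u_p$, which by Theorem A2 sits on the corresponding Pohožaev manifold at level $c_\mu^{(p)}$. A direct scaling argument applied to a near-minimizer of $c_\mu$ gives $\limsup_{p\uparrow p^*} c_\mu^{(p)}\leq c_\mu$, whence the Pohožaev identity produces a uniform $H^1$-bound on $\{u_p\}$. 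Up to a subsequence $u_p\rightharpoonup u_*$ weakly in $H^1$ and a.e., and by Strauss's compactness lemma strongly in $L^q(\mathbb{R}^N)$ for every $q\in(2,2N/(N-2))$. Conditions (f2)--(f3) together with Lemma \ref{lem HLS} let me pass to the limit in all $F$- and $f$-terms; the purely critical convolution is treated by a Brezis--Lieb type splitting, so $u_*$ is a weak solution of (\ref{e1.16}).

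The main obstacle is ruling out $u_*\equiv 0$. This requires the strict upper bound
\[
c_\mu < \frac{\alpha+2}{2(N+\alpha)}\, S_*^{(N+\alpha)/(\alpha+2)},
\]
where $S_*$ is the sharp constant for the purely critical Choquard problem --- exactly the threshold below which no concentration into an Aubin--Talenti bubble is possible. To obtain it I would evaluate $J_\mu$ on a cut-off, rescaled Talenti bubble $U_{\varepsilon,R}$ projected onto $\mathcal{P}_\mu$, expand in $\varepsilon$, and check that the critical part reproduces the threshold while the cross and $F$-terms contribute a strictly negative leading order. This is where the three-case assumption (f4) enters essentially: it is calibrated, dimension by dimension, so that $\int F(U_{\varepsilon,R})\,I_\alpha \ast F(U_{\varepsilon,R})$ and the mixed term dominate the cut-off error coming from the non-integrability of the bubble in $N=3,\,4$ and from its marginal subcritical moments in $N\geq 5$. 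Once $u_*\neq 0$, weak lower semicontinuity combined with $u_*\in\mathcal{P}_\mu$ and $\limsup c_\mu^{(p)}\leq c_\mu$ yields $J_\mu(u_*)=c_\mu$. Nonnegativity and radial symmetry pass to the a.e.\ limit from the $u_p$, and the strong maximum principle applied to (\ref{e1.16}) (whose right-hand side is nonnegative by (f1)) upgrades this to strict positivity, completing the proof.
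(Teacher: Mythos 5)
Your proposal does not prove the statement in question. The statement is the Hardy--Littlewood--Sobolev inequality (Lemma \ref{lem HLS}): for $p,r>1$ with $1/p+(N-\alpha)/N+1/r=2$ one has
\begin{equation*}
\left|\int_{\mathbb{R}^N}\int_{\mathbb{R}^N}\frac{u(x)v(y)}{|x-y|^{N-\alpha}}\,dx\,dy\right|\leq C(N,\alpha,p)\|u\|_p\|v\|_r,
\end{equation*}
together with the explicit value of the sharp constant $C_\alpha(N)$ in the diagonal case $p=r=\frac{2N}{N+\alpha}$. What you have written instead is an outline of the proof of the paper's main existence result (Theorem \ref{thm1.1}): a Poho\v{z}aev-manifold framework, subcritical approximation via Moroz--Schaftingen, Strauss compactness, and an energy estimate below the critical threshold. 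None of that establishes the integral inequality above; indeed your argument explicitly \emph{invokes} Lemma \ref{lem HLS} as a tool (``Conditions (f2)--(f3) together with Lemma \ref{lem HLS} let me pass to the limit\dots''), so as a proof of that lemma it would be circular.

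For the record, the paper does not prove this lemma either: it is quoted as a classical result from Lieb and Loss, where the sharp constant for $p=r=\frac{2N}{N+\alpha}$ is obtained by Riesz rearrangement and a symmetrization/conformal-invariance argument identifying the optimizers $u(x)=c\,(1+|x|^2)^{-(N+\alpha)/2}$ up to translation and dilation. A genuine proof would have to follow that route (or give the non-sharp bound via layer-cake and weak Young-type estimates and then a separate argument for sharpness); a variational construction of groundstates for the Choquard equation is simply the wrong object. If your intent was to prove Theorem \ref{thm1.1}, your sketch is broadly aligned with the paper's strategy, but it should be submitted against that statement, not this one.
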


\begin{remark}\label{rek1.3}
By the Hardy-Littlewood-Sobolev inequality above, for any $v\in
L^s(\mathbb{R}^N)$ with $s\in(1,\frac{N}{\alpha})$, $I_\alpha\ast
v\in L^{\frac{Ns}{N-\alpha s}}(\mathbb{R}^N)$ and
\begin{equation*}
\|I_\alpha\ast v\|_{\frac{Ns}{N-\alpha s}}\leq
A_\alpha(N)C(N,\alpha,s)\|v\|_s,
\end{equation*}
where $A_\alpha(N)$ is defined in (\ref{e1.37}).
\end{remark}

To prove Theorem \ref{thm1.1}, we need the following compactness
lemma of Strauss \cite{Strauss 1977} (see also
\cite{Berestycki-Lions 1983}).

\begin{lemma}\label{lem compact} Let $Q_1,\ Q_2: \mathbb{R}\to \mathbb{R}$
be two continuous functions satisfying
\begin{equation*}
\frac{Q_1(s)}{Q_2(s)}\to 0 \ \text{as}\ |s|\to \infty.
\end{equation*}
Let $\{u_n\}$ be a sequence of measurable functions:
$\mathbb{R}^N\to \mathbb{R}$ such that
\begin{equation*}
\sup_{n}\int_{\mathbb{R}^N}|Q_2(u_n(x))|dx<\infty
\end{equation*}
and $Q_1(u_n(x))\to v(x)$ a.e. in $\mathbb{R}^N$, as $n\to \infty$.
Then for any bounded Borel set $B$ one has
\begin{equation*}
\int_{B}|Q_1(u_n(x))-v(x)|dx\to 0, \ \text{as}\ n\to \infty.
\end{equation*}
If one further assumes that
\begin{equation*}
\frac{Q_1(s)}{Q_2(s)}\to 0 \ \text{as}\ s\to 0
\end{equation*}
and $u_n(x)\to 0$ as $|x|\to \infty$, uniformly with respect to $n$,
then $Q_1(u_n)$ converges to $v$ in $L^1(\mathbb{R}^N)$ as $n\to
\infty$.
\end{lemma}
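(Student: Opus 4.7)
The plan is to reduce both conclusions to a uniform-integrability argument driven by the single principle that the growth hypothesis on $Q_1/Q_2$ together with the $L^1$ bound on $Q_2(u_n)$ forces $Q_1(u_n)$ to be small on any set where $|u_n|$ is either very large (for the first part) or very small (for the second part). On a bounded Borel set this will let Vitali's convergence theorem upgrade the pointwise a.e.\ convergence to convergence in $L^1$; on $\mathbb{R}^N$, the additional uniform vanishing of $u_n$ at infinity converts the same mechanism into control of the exterior tail.

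\textbf{Local statement.} Fix a bounded Borel set $B$ and $\epsilon>0$. Using $Q_1(s)/Q_2(s)\to 0$ as $|s|\to\infty$, pick $M>0$ with $|Q_1(s)|\le\epsilon|Q_2(s)|$ for $|s|\ge M$, and set $\widetilde M:=\max_{|s|\le M}|Q_1(s)|$, which is finite by continuity of $Q_1$. For any measurable $E\subset B$, splitting according to whether $|u_n|\le M$ or $|u_n|>M$ gives
\begin{equation*}
\int_E |Q_1(u_n)|\,dx \;\le\; \widetilde M\,|E| + \epsilon\int_{\mathbb{R}^N} |Q_2(u_n)|\,dx \;\le\; \widetilde M\,|E| + \epsilon C,
\end{equation*}
with $C:=\sup_n\|Q_2(u_n)\|_1$. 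Hence $\{Q_1(u_n)\}$ is uniformly integrable on $B$, and since $Q_1(u_n)\to v$ a.e.\ on the finite-measure set $B$, Vitali's convergence theorem yields $\int_B |Q_1(u_n)-v|\,dx\to 0$. Fatou's lemma, applied to $|Q_1(u_n)|$, gives $v\in L^1(B)$ as a by-product.

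\textbf{Global statement.} Now assume also $Q_1(s)/Q_2(s)\to 0$ as $s\to 0$ and $u_n(x)\to 0$ uniformly in $n$ as $|x|\to\infty$. Given $\epsilon>0$, choose $\eta>0$ with $|Q_1(s)|\le\epsilon|Q_2(s)|$ for $|s|\le\eta$, then choose $R>0$ with $|u_n(x)|\le\eta$ for all $|x|>R$ and all $n$. Then
\begin{equation*}
\int_{|x|>R} |Q_1(u_n)|\,dx \;\le\; \epsilon\int_{\mathbb{R}^N}|Q_2(u_n)|\,dx \;\le\; \epsilon C,
\end{equation*}
and Fatou gives the same bound for $\int_{|x|>R}|v|\,dx$. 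The local statement applied to $B=\{|x|\le R\}$ handles the interior, so that $\limsup_n \|Q_1(u_n)-v\|_{L^1(\mathbb{R}^N)} \le 2\epsilon C$; since $\epsilon$ was arbitrary, $Q_1(u_n)\to v$ in $L^1(\mathbb{R}^N)$.

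\textbf{Main obstacle.} The one delicate point is producing uniform integrability without any pointwise dominating function for $Q_1(u_n)$ and without any $L^p$ bound on $Q_1(u_n)$ itself: the split over $\{|u_n|\le M\}$ versus $\{|u_n|>M\}$ is exactly what decouples the two regimes, letting continuity of $Q_1$ tame the bounded part while the ratio hypothesis absorbs the tail into the single uniform bound on $Q_2(u_n)$. Once this is set up, the transition from bounded Borel sets to all of $\mathbb{R}^N$ is a routine tail argument powered by the companion ratio hypothesis $Q_1/Q_2\to 0$ at the origin.
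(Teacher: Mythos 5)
Your proof is correct. The paper does not prove this lemma at all --- it is quoted from Strauss and Berestycki--Lions with a citation --- and your uniform-integrability plus Vitali argument on the bounded set, combined with the tail estimate from the ratio condition at $0$ and the uniform decay of $u_n$, is essentially the classical proof given in those references. The only unstated micro-step is that $Q_1(0)=0$ (forced by continuity and $Q_1(s)/Q_2(s)\to 0$ as $s\to 0$), which is needed so that the bound $|Q_1(u_n(x))|\le\epsilon|Q_2(u_n(x))|$ on $\{|x|>R\}$ also covers points where $u_n(x)=0$; this is routine and does not affect the argument.
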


The following lemma is useful in concerning the uniform bound of
radial nonincreasing functions, see \cite{Berestycki-Lions 1983} for
its proof.

\begin{lemma}\label{lem symmet}
If $u\in L^t(\mathbb{R}^N),\ 1\leq t<+\infty$, is a radial
nonincreasing function (i.e. $0\leq u(x)\leq u(y)$ if $|x|\geq
|y|$), then one has
\begin{equation*}
|u(x)|\leq |x|^{-N/t}\left(\frac{N}{|S^{N-1}|}\right)^{1/t}\|u\|_t,
\ x\neq 0.
\end{equation*}
\end{lemma}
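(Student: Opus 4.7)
The plan is to exploit the radial nonincreasing hypothesis to trap $|u(x)|^t$ inside an integral over the ball $B_{|x|}(0)$, then use the explicit formula for the volume of that ball.

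More concretely, fix $x \neq 0$. Since $u$ is radial and nonincreasing, for every $y \in \mathbb{R}^N$ with $|y| \leq |x|$ we have $|u(y)| \geq |u(x)|$. Therefore
\begin{equation*}
\|u\|_t^t = \int_{\mathbb{R}^N} |u(y)|^t\, dy \geq \int_{B_{|x|}(0)} |u(y)|^t\, dy \geq |u(x)|^t \cdot \bigl|B_{|x|}(0)\bigr|.
\end{equation*}
The next step is simply to substitute the standard formula $|B_{|x|}(0)| = \frac{|S^{N-1}|}{N}|x|^N$, solve for $|u(x)|^t$, and take $t$-th roots to obtain the claimed pointwise bound
\begin{equation*}
|u(x)| \leq |x|^{-N/t}\left(\frac{N}{|S^{N-1}|}\right)^{1/t}\|u\|_t.
\end{equation*}

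There is essentially no obstacle here: the argument is a one-line monotonicity-plus-volume computation, and the assumption $u \in L^t(\mathbb{R}^N)$ is exactly what is needed to ensure the right-hand side is finite. The only minor points to mention for rigor are that monotonicity of $u$ (with the convention $0 \leq u(x) \leq u(y)$ for $|x| \geq |y|$) legitimately gives $|u(y)|^t \geq |u(x)|^t$ on $B_{|x|}(0)$, and that $x \neq 0$ is required so that the ball has positive volume and the negative power $|x|^{-N/t}$ is well-defined.
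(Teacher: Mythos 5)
Your argument is correct and is exactly the standard radial lemma proof (monotonicity of $u$ traps $|u(x)|^t$ under the average of $|u|^t$ over the ball $B_{|x|}(0)$, whose volume is $\frac{|S^{N-1}|}{N}|x|^N$), which is the same proof the paper relies on via its citation to Berestycki--Lions. No gaps; the remarks about $x\neq 0$ and finiteness of $\|u\|_t$ cover the only delicate points.
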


The following lemma can be found in  \cite{Bogachev 2007} and
\cite{Willem 2013}.

\begin{lemma}\label{lem weak}
Let $\Omega\subset \mathbb{R}^N$ be a domain, $q\in (1,\infty)$ and
$\{u_n\}$ be a bounded sequence in $L^{q}(\Omega)$. If $u_n\to u$
a.e. on $\Omega$, then $u_n\rightharpoonup u$ weakly in
$L^q(\Omega)$.
\end{lemma}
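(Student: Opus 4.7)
The plan is to combine a Pohožaev constraint with subcritical approximation and the Strauss compactness lemma. First I would introduce the energy functional
\begin{equation*}
J(u) = \frac{1}{2}\int_{\mathbb{R}^N}\bigl(|\nabla u|^2 + \kappa u^2\bigr) - \frac{1}{2}\int_{\mathbb{R}^N}\bigl(I_\alpha * H(u)\bigr)H(u),\qquad H(u) := \mu|u|^{p^*} + F(u),
\end{equation*}
and, via Theorem A2 adapted to (\ref{e1.16}), the Pohožaev functional $P(u)$. The candidate groundstate level is
\begin{equation}\label{e1.35}
m = \inf\bigl\{J(u) : u \in H^1(\mathbb{R}^N)\setminus\{0\},\ P(u) = 0\bigr\}.
\end{equation}
Standard rearrangements show that this infimum is achieved if and only if a radial nonincreasing positive groundstate exists, so everything reduces to the attainment of $m$.

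Next I would approximate (\ref{e1.16}) from below: for $p\in(p^*-\varepsilon,p^*)$ replace $\mu|u|^{p^*}$ by $\mu|u|^{p}/p$ in the nonlinearity. The perturbed equation satisfies the Berestycki--Lions type hypotheses (g1)--(g4) of \cite{Moroz-Schaftingen 2015}, thanks to (f1)--(f3), so Theorem A1 produces a positive radial nonincreasing groundstate $u_p\in H^1(\mathbb{R}^N)$ at Pohožaev level $m_p$. A direct scaling on the manifold gives $\limsup_{p\nearrow p^*} m_p \le m$, and the monotonicity of the nonlinearity in $p$ gives $m_p \le m$ uniformly.

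The crucial step is a sharp upper bound for $m$: I will show $m$ lies strictly below the critical threshold
\begin{equation*}
m^* := \frac{\alpha+2}{2(N+\alpha)}\bigl(\mu p^{*} S_{H,L}\bigr)^{-(N-2)/(\alpha+2)}
\end{equation*}
associated with the pure upper-critical Choquard term, where $S_{H,L}$ is the best constant of the corresponding Hardy--Littlewood--Sobolev--Sobolev embedding. The proof inserts Talenti-type instantons $U_\varepsilon$ centered at the origin, projects them onto the Pohožaev manifold, and expands $J$ in $\varepsilon$. The leading term gives $m^*$, the $F$-contribution must dominate the next correction coming from $\kappa\|U_\varepsilon\|_2^2$ minus the Choquard cross term involving $F$. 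A case distinction $N=3,\ N=4,\ N\ge 5$ is required because the $L^2$ norm of $U_\varepsilon$ scales differently, and condition (f4) is tailored exactly so that the $F$-term wins in each case. This is the main obstacle and the most delicate calculation.

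With the gap $m<m^*$ in hand, I examine $\{u_p\}$ as $p\nearrow p^*$. Lemma~\ref{lem symmet} yields a uniform decay-at-infinity estimate for radial nonincreasing functions, and Lemmas~\ref{lem1.7} and \ref{lem HLS} together with (f2) give uniform $H^1$ bounds from the Pohožaev identity and $J(u_p)=m_p$. Passing to a subsequence, $u_p\rightharpoonup u$ weakly in $H^1$, pointwise a.e., and $u$ is radial nonincreasing. The Strauss compactness Lemma~\ref{lem compact}, applied with $Q_1$ built from $F$ or $sf(s)$ and $Q_2$ from the lower-critical and upper-critical powers, gives strong convergence of all subcritical pieces in $L^1$ on $\mathbb{R}^N$ (using uniform decay at infinity to pass from bounded sets to $\mathbb{R}^N$). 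For the purely critical piece I apply a nonlocal Brezis--Lieb splitting to $|u_p|^{p^*}$ combined with Lemma~\ref{lem weak}: if $u\equiv 0$, the bubble part of $u_p$ would carry energy at least $m^*$, contradicting $m_p\to m<m^*$. Hence $u\not\equiv 0$, $P(u)=0$, and $J(u)\le m$, so $u$ is a groundstate. Positivity follows from a strong maximum principle applied to the limit equation, and radial symmetry is preserved under pointwise limits; the bootstrap regularity argument of \cite{Moroz-Schaftingen JFA 2013} upgrades $u$ to a classical positive solution.
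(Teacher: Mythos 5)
Your proposal does not prove the statement at issue. The statement is Lemma \ref{lem weak}: if $\{u_n\}$ is bounded in $L^q(\Omega)$ with $1<q<\infty$ and $u_n\to u$ almost everywhere on $\Omega$, then $u_n\rightharpoonup u$ weakly in $L^q(\Omega)$. What you have written is instead an outline of the proof of Theorem \ref{thm1.1} (Pohožaev constraint, subcritical approximation, Strauss compactness, energy-level estimate below the critical threshold); in fact your sketch \emph{invokes} Lemma \ref{lem weak} as a tool for the purely critical piece, so it cannot serve as a proof of it. Nothing in your text addresses why boundedness plus a.e.\ convergence forces weak convergence, which is the entire content of the lemma. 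In the paper this lemma is not proved but quoted from \cite{Bogachev 2007} and \cite{Willem 2013}.

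For completeness, the standard argument you would need runs as follows. Since $1<q<\infty$, $L^q(\Omega)$ is reflexive, so the bounded sequence $\{u_n\}$ has the property that every subsequence admits a further subsequence converging weakly to some $v\in L^q(\Omega)$. To identify $v$ with $u$, apply Mazur's lemma (or Egorov's theorem combined with H\"older on sets of finite measure): suitable convex combinations of the weakly convergent subsequence converge to $v$ strongly in $L^q$, hence a.e.\ along a further subsequence, while the same convex combinations converge a.e.\ to $u$ by hypothesis; therefore $v=u$ a.e. Since every subsequence has a further subsequence converging weakly to the same limit $u$, the full sequence satisfies $u_n\rightharpoonup u$ in $L^q(\Omega)$. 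You should either supply such an argument or cite the references as the paper does; the material in your proposal belongs to the proof of Theorem \ref{thm1.1}, not here.
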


To prove Theorem \ref{thm1.1}, inspired by
 \cite{Liu-Liu-Wang 2013}, we introduce
the following equation
\begin{equation}\label{e1.5}
-\Delta u+\kappa u=\left(I_{\alpha}\ast (\mu
|u|^p+F(u))\right)(p\mu|u|^{p-2}u+f(u)),\quad \mathrm{in}\
\mathbb{R}^N,
\end{equation}
where $p\in[p_0,p^*]$, $p_0=(p^*-p_*)/4$,\ $p_*=\frac{N+\alpha}{N}$,
$\kappa$, $\mu$, $p^*$, $\alpha$ and $F(u)$ are defined in
(\ref{e1.16}). For $p=p^*$, equation (\ref{e1.5}) is reduced to
(\ref{e1.16}), and for $p< p^*$, equation (\ref{e1.5}) is
subcritical, which is already considered in \cite{Moroz-Schaftingen
2015}.

Conditions (f1)-(f3), the Hardy-Littlewood-Sobolev inequality and
the Sobolev embedding theorem imply that the functional $I_p:\
H^1(\mathbb{R}^N)\to \mathbb{R}$ defined by
\begin{equation}\label{e1.6}
I_p(u)=\frac{1}{2}\int_{\mathbb{R}^N}|\nabla u|^2
+\kappa|u|^2-\frac{1}{2}\int_{\mathbb{R}^N}\left(I_{\alpha}\ast(\mu|u|^{p}+F(u))\right)(\mu|u|^{p}+F(u))
\end{equation}
is $C^1(H^1(\mathbb{R}^N),\mathbb{R})$ and
\begin{equation}\label{e1.7}
\langle I_p'(u),w\rangle=\int_{\mathbb{R}^N}\nabla u\nabla w +\kappa
uw-\int_{\mathbb{R}^N}\left(I_{\alpha}\ast(\mu|u|^{p}+F(u))\right)(p\mu|u|^{p-2}uw+f(u)w)
\end{equation}
for any $u,\ w\in H^1(\mathbb{R}^N)$. Thus, any critical point of
$I_p$ in $H^{1}(\mathbb{R}^N)$ is a weak solution of (\ref{e1.5}). A
nontrivial solution $u\in H^1(\mathbb{R}^N)$  of (\ref{e1.5}) is
called a groundstate if
\begin{align}\label{e1.34}
I_p(u)=m_p^g:=\inf\{I_p(v):v\in H^{1}(\mathbb{R}^N)\setminus \{0\}\
\mathrm{and}\ I'_p(v)=0 \}.
\end{align}
Furthermore, we define
\begin{equation}\label{e1.35}
m_p=\inf\{I_p(v):v\in H^{1}(\mathbb{R}^N)\setminus \{0\}\
\mathrm{and}\ P_p(v)=0 \},
\end{equation}
where
\begin{equation*}
\begin{split}
P_p(u)=&\frac{N-2}{2}\int_{\mathbb{R}^N}|\nabla u|^2
+\frac{N}{2}\int_{\mathbb{R}^N}\kappa|u|^2\\
&\quad -\frac{N+\alpha}{2}\int_{\mathbb{R}^N}\left(I_{\alpha}\ast
(\mu|u|^{p}+F(u))\right)(\mu|u|^{p}+F(u)).
\end{split}
\end{equation*}

In the following, we will show that there exists $u\in
H^1(\mathbb{R}^N)\setminus \{0\}$ such that $P_p(u)=0$. Thus, $m_p$
is well defined. To this end, we first give an elementary lemma.

\begin{lemma}\label{lem1.1}
Let $a,\ b,\ c,\ \alpha>0$ and  $n\geq 3$  be constants. Define
$f:[0,\infty)\to \mathbb{R}$ as
\begin{align*}
f(t)=a t^{n-2}+b t^{n}-c t^{n+\alpha}.
\end{align*}
Then $f$ has a unique critical point which corresponds to its
maximum.
\end{lemma}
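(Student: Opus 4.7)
The plan is a direct calculus argument: compute $f'$, show it has exactly one zero on $(0,\infty)$, and then read off the sign of $f'$ on either side of that zero.

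First I would differentiate and factor out the lowest power of $t$, writing
\[
f'(t) = a(n-2)\,t^{n-3} + bn\, t^{n-1} - c(n+\alpha)\, t^{n+\alpha-1} = t^{n-3}\bigl[\,a(n-2) + bn\, t^2 - c(n+\alpha)\, t^{\alpha+2}\,\bigr].
\]
Since $n\geq 3$, the prefactor $t^{n-3}$ is positive for $t>0$, so the positive critical points of $f$ coincide with the positive solutions of
\[
a(n-2) + bn\, t^2 \;=\; c(n+\alpha)\, t^{\alpha+2}.
\]

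Next I would divide both sides by $t^{\alpha+2}$ in order to rewrite this equation as $\phi(t)=c(n+\alpha)$, where
\[
\phi(t) := a(n-2)\,t^{-(\alpha+2)} + bn\,t^{-\alpha}.
\]
The virtue of this reformulation is that $\phi$ is a sum of two strictly decreasing positive functions on $(0,\infty)$ (using $a,b>0$, $n-2>0$, $\alpha>0$), so $\phi$ is itself strictly decreasing with $\lim_{t\to 0^+}\phi(t)=+\infty$ and $\lim_{t\to\infty}\phi(t)=0$. Consequently $\phi$ attains the positive value $c(n+\alpha)$ at exactly one point $t^{*}\in(0,\infty)$.

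Finally I would extract the sign of $f'$: by monotonicity, $\phi(t)>c(n+\alpha)$ on $(0,t^{*})$, which rearranges to $a(n-2)+bn\,t^2 > c(n+\alpha)\,t^{\alpha+2}$ and thus $f'(t)>0$ there; the reverse inequality on $(t^{*},\infty)$ gives $f'(t)<0$. Hence $t^{*}$ is the unique critical point of $f$ in $(0,\infty)$ and is a strict global maximum, as required. There is no serious obstacle in this argument; the only mild subtlety is that $t=0$ must be excluded from the discussion when $n\geq 4$ (the factored form shows $f'(0)=0$, but this is a boundary artifact, since $f$ is strictly increasing on a right-neighborhood of $0$), and this is cleanly handled by working on $(0,\infty)$ from the outset.
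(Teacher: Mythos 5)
Your proof is correct and complete. The paper does not actually supply a proof of Lemma~\ref{lem1.1} — it is stated as an elementary fact and immediately applied to the scaled functional $\varphi(\tau)$ in Lemma~\ref{lem1.2} — so there is no argument in the source to compare against; your derivative computation, the reduction to the strictly decreasing auxiliary function $\phi(t)=a(n-2)t^{-(\alpha+2)}+bn\,t^{-\alpha}$ with limits $+\infty$ at $0^+$ and $0$ at $\infty$, and the resulting sign analysis of $f'$ constitute a clean, self-contained justification. Your remark about the boundary point $t=0$ when $n\geq 4$ is a sensible clarification: in that case $f'(0)=0$, but since $f(0)=0$ and $f$ is strictly increasing on a right-neighborhood of $0$, the only interior critical point is $t^{*}$, which is indeed the strict global maximum of $f$ on $[0,\infty)$ because $f(t)\to -\infty$ as $t\to\infty$.
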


Define $u_\tau$ by
\begin{align}\label{e1.8}
u_{\tau}(x)=\left\{\begin{array}{cc}
u(x/\tau),&\tau>0,\\
0,& \tau=0.
\end{array}
 \right.
\end{align}
Then we have the following result.

\begin{lemma}\label{lem1.2}
Assume that $N\geq 3$, $\alpha\in (0,N)$, $p\in[p_0,p^*]$ and
(f1)-(f3) hold. Then for every $u\in H^{1}(\mathbb{R}^N)\setminus
\{0\}$, there exists a unique $\tau_0>0$ such that
$P_p(u_{\tau_0})=0$. Moreover, $I_p(u_{\tau_0})=\max_{\tau\geq
0}I_p(u_{\tau})$.
\end{lemma}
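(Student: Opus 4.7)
The plan is to reduce everything to the elementary one-variable Lemma \ref{lem1.1} by computing how each term in $I_p$ and $P_p$ scales under $u\mapsto u_\tau$.

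First I would record the scaling identities. For $\tau>0$, a change of variables gives
\begin{equation*}
\int_{\mathbb{R}^N}|\nabla u_\tau|^2=\tau^{N-2}\|\nabla u\|_2^2,\qquad \int_{\mathbb{R}^N}|u_\tau|^2=\tau^N\|u\|_2^2,
\end{equation*}
and, substituting $x=\tau x'$, $y=\tau y'$ in the double integral defining the Riesz convolution,
\begin{equation*}
\int_{\mathbb{R}^N}\bigl(I_\alpha\ast(\mu|u_\tau|^p+F(u_\tau))\bigr)(\mu|u_\tau|^p+F(u_\tau))=\tau^{N+\alpha}\,\mathcal{N}(u),
\end{equation*}
where $\mathcal{N}(u):=\int_{\mathbb{R}^N}(I_\alpha\ast(\mu|u|^p+F(u)))(\mu|u|^p+F(u))$. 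Consequently
\begin{equation*}
I_p(u_\tau)=\tfrac{1}{2}\|\nabla u\|_2^2\,\tau^{N-2}+\tfrac{\kappa}{2}\|u\|_2^2\,\tau^N-\tfrac{1}{2}\mathcal{N}(u)\,\tau^{N+\alpha},
\end{equation*}
and a direct differentiation shows $\tau\,\tfrac{d}{d\tau}I_p(u_\tau)=P_p(u_\tau)$, so that on $(0,\infty)$ the critical points of $\tau\mapsto I_p(u_\tau)$ coincide exactly with the zeros of $\tau\mapsto P_p(u_\tau)$.

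Next I would verify the positivity of the three coefficients so that Lemma \ref{lem1.1} applies. Since $u\in H^1(\mathbb{R}^N)\setminus\{0\}$ cannot be a nonzero constant (constants fail to lie in $L^2(\mathbb{R}^N)$), we have $\|\nabla u\|_2>0$ and $\|u\|_2>0$. For the nonlocal term, (f1) implies $f\geq 0$ on $[0,\infty)$ and $f$ is odd, so $F$ is even and nonnegative; hence $\mu|u|^p+F(u)\geq\mu|u|^p\geq 0$ with strict inequality on a set of positive measure. Because $I_\alpha$ is a strictly positive kernel, $\mathcal{N}(u)>0$. Thus with
\[
a=\tfrac{1}{2}\|\nabla u\|_2^2,\quad b=\tfrac{\kappa}{2}\|u\|_2^2,\quad c=\tfrac{1}{2}\mathcal{N}(u),\quad n=N\geq 3,
\]
the function $\tau\mapsto I_p(u_\tau)$ is precisely the function $f$ of Lemma \ref{lem1.1}.

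Finally, Lemma \ref{lem1.1} produces a unique critical point $\tau_0>0$ which is a global maximum of $\tau\mapsto I_p(u_\tau)$ on $[0,\infty)$ (noting $I_p(u_0)=0$ by the convention in \eqref{e1.8}). By the identification above, $\tau_0$ is the unique positive zero of $P_p(u_\tau)$, so $P_p(u_{\tau_0})=0$ and $I_p(u_{\tau_0})=\max_{\tau\geq 0}I_p(u_\tau)$, as required. The only potential subtlety I foresee is justifying $\mathcal{N}(u)>0$; I resolve it by combining (f1) (which gives $F\geq 0$) with the strict positivity of $I_\alpha$, so this will not require any serious argument.
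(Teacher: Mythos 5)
Your proposal is correct and follows essentially the same route as the paper: express $I_p(u_\tau)$ via scaling as a three-term polynomial in $\tau$, then invoke Lemma~\ref{lem1.1} and identify $P_p(u_\tau)$ with $\tau\,\frac{d}{d\tau}I_p(u_\tau)$. The only difference is that you spell out the (correct and worthwhile, but omitted in the paper) verification that the three coefficients are strictly positive so that Lemma~\ref{lem1.1} actually applies.
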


\begin{proof}
Set $\varphi(\tau)=I_p(u_\tau)$. By direct calculation, we have
\begin{equation}\label{e1.36}
\varphi(\tau)=\frac{\tau^{N-2}}{2}\int_{\mathbb{R}^N}|\nabla
u|^2+\frac{\tau^{N}}{2}\int_{\mathbb{R}^N}\kappa|u|^2-\frac{\tau^{N+\alpha}}{2}\int_{\mathbb{R}^N}(I_{\alpha}\ast
G(u))G(u),
\end{equation}
where $G(u)=F(u)+\mu|u|^p$. By Lemma \ref{lem1.1}, $\varphi(\tau)$
has a unique critical point $\tau_0$ which corresponding to its
maximum. Hence, $I_p(u_{\tau_0})=\max_{\tau\geq 0}I_p(u_{\tau})$ and
\begin{equation*}
\begin{split}
0=\varphi'(\tau_0)=&\frac{N-2}{2}\tau_0^{N-3}\int_{\mathbb{R}^N}|\nabla
u|^2+\frac{N}{2}\tau_0^{N-1}\int_{\mathbb{R}^N}\kappa|u|^2\\
&\quad
-\frac{N+\alpha}{2}\tau_0^{N+\alpha-1}\int_{\mathbb{R}^N}(I_{\alpha}\ast
G(u))G(u).
\end{split}
\end{equation*}
That is, $P_p(u_{\tau_0})=0$. The proof is complete.
\end{proof}

The following remark is about the relationship of $m_p$ and $m_p^g$.

\begin{remark}\label{rek 1.1}
By Theorem A.2, we have $m_{p}\leq m_{p}^g$ for any $p\in[p_0,p^*]$.
Moreover, by the proof of \cite{Moroz-Schaftingen 2015}, we can also
obtain $m_{p}= m_{p}^g$ for any $p\in[p_0,p^*)$. In fact, in
\cite{Moroz-Schaftingen 2015}, they proved that
\begin{equation*}
m_{p}^g=b_p:=\inf_{\gamma\in \Gamma}\sup_{t\in [0,1]}I_p(\gamma(t)),
\end{equation*}
where the set of paths is defined as
\begin{equation*}
\Gamma=\{\gamma\in C([0,1], H^1(\mathbb{R}^N)): \gamma(0)=0,\
I_p(\gamma(1))<0\}.
\end{equation*}
For any $u\in H^1(\mathbb{R}^N)\setminus \{0\}$ with $P_p(u)=0$, let
$u_\tau$ be defined in (\ref{e1.8}). By (\ref{e1.36}), there exists
$\tau_0>0$ large enough such that $I_p(u_{\tau_0})<0$. Lemma
\ref{lem1.2} implies that
\begin{equation*}
b_p\leq \max_{\tau\geq 0}I_p(u_\tau)=I_p(u).
\end{equation*}
Since $u$ is arbitrary, we obtain $m_{p}^g=b_p\leq m_p$. Hence,
$m_{p}^g= m_p$ for any $p\in[p_0,p^*)$.
\end{remark}

\section{Proof of Theorem \ref{thm1.1}}

\setcounter{section}{3}
\setcounter{equation}{0}

In this section, we first give some results about the energy level
$m_p$, and then we prove Theorem \ref{thm1.1}.

\begin{lemma}\label{lem1.8}
Assume that $N\geq 3$, $p\in [p_0,p^*]$ and (f1)-(f3) hold. Then
$m_p\geq 0$.
\end{lemma}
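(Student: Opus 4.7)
The plan is to exploit the scaling argument already set up in Lemma \ref{lem1.2}. Given any admissible competitor $u \in H^1(\mathbb{R}^N)\setminus\{0\}$ with $P_p(u)=0$, I would consider the one-parameter family $u_\tau$ from (\ref{e1.8}) and the associated function $\varphi(\tau) = I_p(u_\tau)$ written out in (\ref{e1.36}). The key observation is that $\varphi(0) = I_p(0) = 0$, so it suffices to show that $I_p(u) = \varphi(1)$ is the global maximum of $\varphi$ on $[0,\infty)$.

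To establish that, I would apply Lemma \ref{lem1.2} to $u$: there is a unique $\tau_0>0$ with $P_p(u_{\tau_0})=0$, and $I_p(u_{\tau_0}) = \max_{\tau \geq 0} I_p(u_\tau)$. Since $u = u_1$ already satisfies $P_p(u)=0$, the uniqueness clause forces $\tau_0 = 1$. Consequently
\begin{equation*}
I_p(u) = \varphi(1) = \max_{\tau \geq 0}\varphi(\tau) \geq \varphi(0) = 0.
\end{equation*}
Taking the infimum over all such $u$ yields $m_p \geq 0$, as desired.

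There is essentially no obstacle here; the whole argument is a short invocation of Lemma \ref{lem1.2} combined with the trivial evaluation $\varphi(0)=0$. The only subtlety worth double-checking is that Lemma \ref{lem1.2} is indeed applicable in the full range $p \in [p_0, p^*]$ (including the critical exponent $p^*$), which it is, since its hypotheses match those of the present lemma. I would write the proof in essentially one display, with a preceding sentence fixing a competitor $u$ and a concluding sentence passing to the infimum.
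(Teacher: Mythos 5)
Your proof is correct, but it follows a genuinely different route from the paper's. The paper's argument is purely algebraic and shorter: for any competitor $u$ with $P_p(u)=0$, one observes that
\begin{equation*}
I_p(u) \;=\; I_p(u)-\tfrac{1}{N+\alpha}P_p(u)
\;=\;\tfrac{\alpha+2}{2(N+\alpha)}\int_{\mathbb{R}^N}|\nabla u|^2
      +\tfrac{\alpha}{2(N+\alpha)}\int_{\mathbb{R}^N}\kappa|u|^2 \;\geq\;0,
\end{equation*}
and then passes to the infimum. This requires nothing beyond the definitions of $I_p$ and $P_p$ and the sign $\alpha>0$; it does not invoke the scaling family or Lemma \ref{lem1.2} at all, and in fact this same combination $I_p-\tfrac{1}{N+\alpha}P_p$ is reused repeatedly later in the paper (e.g.\ in Lemmas \ref{lem xiajixian} and in the proof of Theorem \ref{thm1.1}). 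Your approach instead identifies $\tau_0=1$ via the uniqueness clause in Lemma \ref{lem1.2} and uses $\varphi(1)=\max_\tau\varphi(\tau)\ge\varphi(0)=0$; this is valid — you correctly observe that $P_p(u_\tau)=\tau\varphi'(\tau)$ so $P_p(u)=P_p(u_1)=0$ forces $\tau_0=1$, and the only hidden hypothesis is that the nonlocal coefficient $c=\tfrac12\int(I_\alpha\ast G(u))G(u)$ in Lemma \ref{lem1.1} is strictly positive, which holds since $\mu>0$, $F\ge0$ and $u\ne0$. Your argument is a nice structural reading of the Poho\v{z}aev manifold, but it is longer and imports more machinery than is needed; the paper's one-line linear combination is the more economical and more reusable device.
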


\begin{proof}
Let $\{u_n\}\subset H^1(\mathbb{R}^N)\setminus \{0\}$ be a sequence
satisfying $\lim_{n\to \infty}I_p(u_n)=m_p$ and $P_p(u_n)=0$. Then
we have
\begin{equation*}
\begin{split}
 I_{p}(u_n)&=I_{p}(u_n)-\frac{1}{N+\alpha}P_{p}(u_n)\\
                     &=\left(\frac{1}{2}-\frac{N-2}{2(N+\alpha)}\right)\int_{\mathbb{R}^N}|\nabla
                     u_n|^2+\left(\frac{1}{2}-\frac{N}{2(N+\alpha)}\right)\int_{\mathbb{R}^N}\kappa|u_n|^2\geq 0,
\end{split}
\end{equation*}
which implies that $m_p\geq 0$.
\end{proof}

\begin{lemma}\label{lem1.5}
Assume that $N\geq 3$, $p\in [p_0,p^*)$ and (f1)-(f3) hold. Then
$$\limsup_{p\to p^{*}}m_p\leq m_{p^*}.$$
\end{lemma}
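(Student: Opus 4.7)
The plan is to upper-bound $m_p$ by evaluating $I_p$ on a rescaling of an almost-minimizer for $m_{p^*}$, and then send $p \to p^*$.

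Step 1: Fix $\eta > 0$. Since $m_{p^*}$ is finite (non-negative by Lemma \ref{lem1.8}, and the constraint set is non-empty by Lemma \ref{lem1.2} applied at $p=p^*$), choose $u \in H^1(\mathbb{R}^N)\setminus\{0\}$ with $P_{p^*}(u)=0$ and $I_{p^*}(u) < m_{p^*}+\eta$.

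Step 2: For each $p\in[p_0,p^*)$ sufficiently close to $p^*$ (in particular, we may assume $p\ge p_*$), Lemma \ref{lem1.2} yields a unique $\tau_p>0$ with $P_p(u_{\tau_p})=0$, hence $m_p \le I_p(u_{\tau_p})$. Writing $A=\int|\nabla u|^2$, $B=\kappa\int|u|^2$, and
\[
C_p=\int_{\mathbb{R}^N}\bigl(I_\alpha\ast(\mu|u|^{p}+F(u))\bigr)\bigl(\mu|u|^{p}+F(u)\bigr),
\]
the identity $P_p(u_{\tau_p})=0$ simplifies (after dividing by $\tau_p^{N-2}$) to
\[
\tfrac{N-2}{2}A+\tfrac{N}{2}\tau_p^{2}B=\tfrac{N+\alpha}{2}\tau_p^{\alpha+2}C_p,
\]
while $P_{p^*}(u)=0$ is the same relation with $\tau=1$ and $C_p$ replaced by $C_{p^*}$.

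Step 3 (the main technical point): show $C_p\to C_{p^*}$ as $p\to p^*$. Expanding the bracket, the only terms involving $p$ are $\mu^2\int(I_\alpha\ast|u|^p)|u|^p$ and $2\mu\int(I_\alpha\ast F(u))|u|^p$. By the Hardy–Littlewood–Sobolev inequality (Lemma \ref{lem HLS}, Remark \ref{rek1.3}), it suffices to prove $|u|^p\to|u|^{p^*}$ in $L^{2N/(N+\alpha)}$. For $p\in[p_*,p^*]$ one has the pointwise bound $|u|^p\le|u|^{p_*}+|u|^{p^*}$, and since $u\in H^1(\mathbb{R}^N)\subset L^2\cap L^{2N/(N-2)}$, both $|u|^{p_*}$ and $|u|^{p^*}$ lie in $L^{2N/(N+\alpha)}$. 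Combining the a.e.\ convergence $|u|^p\to|u|^{p^*}$ with dominated convergence gives the claim.

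Step 4: Conclude. From Step 3, the right-hand side $\tau_p^{\alpha+2}C_p$ in the relation of Step 2 has the limit $\tau^{\alpha+2}C_{p^*}$ as $(p,\tau_p)\to(p^*,\tau)$. Since $A,B,C_{p^*}>0$, elementary analysis of the polynomial equation (cf.\ the uniqueness in Lemma \ref{lem1.1}) forces $\tau_p$ to be bounded and bounded away from $0$ for $p$ near $p^*$, and every subsequential limit $\tau$ solves the limiting equation — whose unique positive solution is $\tau=1$ because $P_{p^*}(u)=0$. Hence $\tau_p\to 1$. Using $\tau_p\to 1$ and $C_p\to C_{p^*}$ in the explicit formula
\[
I_p(u_{\tau_p})=\tfrac{\tau_p^{N-2}}{2}A+\tfrac{\tau_p^{N}}{2}B-\tfrac{\tau_p^{N+\alpha}}{2}C_p,
\]
we obtain $I_p(u_{\tau_p})\to I_{p^*}(u)$. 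Therefore
\[
\limsup_{p\to p^*} m_p \le \lim_{p\to p^*} I_p(u_{\tau_p}) = I_{p^*}(u) < m_{p^*}+\eta,
\]
and letting $\eta\to 0$ finishes the proof.

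The hard part is Step 3: controlling the convergence of the nonlocal nonlinear term uniformly in $p$. Once the $L^{2N/(N+\alpha)}$-continuity of $p\mapsto|u|^p$ is in hand, the rest is polynomial calculus driven by the uniqueness furnished by Lemma \ref{lem1.1}.
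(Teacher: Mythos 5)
Your proposal is correct and reaches the same conclusion, but the endgame differs from the paper's. After selecting a near-minimizer $u$ for $m_{p^*}$ with $P_{p^*}(u)=0$, you prove that $\tau_p\to 1$ by analyzing the polynomial equation $\tfrac{N-2}{2}A+\tfrac{N}{2}\tau_p^2 B=\tfrac{N+\alpha}{2}\tau_p^{\alpha+2}C_p$ and invoking the uniqueness from Lemma \ref{lem1.1}, then pass to the limit in the explicit formula for $I_p(u_{\tau_p})$. The paper instead never establishes $\tau_p\to 1$: it shows the fiber map $\tau\mapsto I_p(u_\tau)$ converges to $\tau\mapsto I_{p^*}(u_\tau)$ \emph{uniformly} on a compact interval $[0,\tau_0]$ chosen so $I_{p^*}(u_{\tau_0})\le -2$; this forces $\tau_p\in(0,\tau_0)$ for $p$ near $p^*$, and then the maximality property from Lemma \ref{lem1.2} (that $\tau=1$ maximizes $I_{p^*}(u_\tau)$ when $P_{p^*}(u)=0$) gives the chain $m_p\le I_p(u_{\tau_p})\le I_{p^*}(u_{\tau_p})+\epsilon\le I_{p^*}(u)+\epsilon$. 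Your route is slightly more concrete (you identify the limit of $\tau_p$ explicitly), while the paper's is a touch softer and shorter because it sidesteps the boundedness-away-from-zero and uniqueness discussion, trading it for a uniform estimate on a compact $\tau$-interval. Both hinge on the same analytic core: the continuity in $p$ of the nonlocal term, which you correctly establish via the pointwise domination $|u|^p\le|u|^{p_*}+|u|^{p^*}$, Hardy--Littlewood--Sobolev, and dominated convergence.
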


\begin{proof}
For any $\epsilon\in(0,1)$, there exists $u\in
H^1(\mathbb{R}^N)\setminus \{0\}$ with $P_{p^*}(u)=0$ such that
$I_{p^*}(u)<m_{p^*}+\epsilon$. In view of (\ref{e1.36}), there
exists $\tau_0>0$ large enough such that $I_{p^*}(u_{\tau_0})\leq
-2$. By the Young inequality, we have
\begin{equation}\label{e1.60}
|u|^{p}\leq \frac{p^*-p}{p^*-p_*}|u|^{p_*}+
\frac{p-p_*}{p^*-p_*}|u|^{p^*},
\end{equation}
and by (f2), we have
\begin{equation}\label{e1.62}
|F(u)|\leq  C \left(|u|^{p_*}+ |u|^{p^*}\right),
\end{equation}
and by the Hardy-Littlewood-Sobolev inequality and the Sobolev
embedding theorem, there exist $C_1,\ C_2>0$ independent of $u$,
such that
\begin{equation}\label{e1.61}
\begin{split}
&\int_{\mathbb{R}^N}\left(I_{\alpha}\ast
|u|^{p_*}\right)|u|^{p_*}\leq C_1
\|u\|_2^{2p_*}\leq C_2\|u\|_{H^1(\mathbb{R}^N)}^{2p_*},\\
&\int_{\mathbb{R}^N}\left(I_{\alpha}\ast
|u|^{p^*}\right)|u|^{p^*}\leq C_1
\|u\|_{2N/(N-2)}^{2p^*}\leq C_2\|u\|_{H^1(\mathbb{R}^N)}^{2p^*},\\
& \int_{\mathbb{R}^N}\left(I_{\alpha}\ast
|u|^{p_*}\right)|u|^{p^*}\leq C_1
\|u\|_2^{p_*}\|u\|_{2N/(N-2)}^{p^*}\leq
C_2\|u\|_{H^1(\mathbb{R}^N)}^{p_*+p^*}.
\end{split}
\end{equation}
Hence, the Lebesgue dominated convergence theorem implies that
$$\frac{\tau^{N+\alpha}}{2}\int_{\mathbb{R}^N}\left(I_{\alpha}\ast
(\mu|u|^p+F(u))\right)(\mu|u|^p+F(u))$$ is continuous on
$p\in[p_0,p^*]$ uniformly with $\tau\in [0,\tau_0]$. Then there
exists $\delta>0$ such that $
|I_p(u_{\tau})-I_{p^*}(u_{\tau})|<\epsilon $ for  $p^*-\delta<p<p^*$
and $0\leq \tau\leq \tau_0$, which implies that $I_p(u_{\tau_0})\leq
-1$ for all $p^*-\delta<p<p^*$. Since $I_{p}(u_{\tau})>0$ for $\tau$
small enough and $I_p(u_0)=0$ for any $p\in[p_0,p^*]$, there exists
$\tau_p\in (0,\tau_0)$ such that
$\frac{d}{d\tau}I_p(u_{\tau})\mid_{\tau=\tau_p}=0$ and then
$P_p(u_{\tau_p})=0$. By Lemma \ref{lem1.2}, $I_{p^*}(u_{\tau_p})\leq
I_{p^*}(u)$. Hence,
\begin{equation*}
m_p\leq I_p(u_{\tau_p})\leq I_{p^*}(u_{\tau_p})+\epsilon\leq
I_{p^*}(u)+\epsilon<m_{p^*}+2\epsilon
\end{equation*}
for any $p^*-\delta<p<p^*$. Thus, $\limsup_{p\to p^{*}}m_p\leq
m_{p^*}$.
\end{proof}

Let $p_n\in [p_0,p^*)$ be a sequence satisfying  $\lim_{n\to
\infty}p_n=p^{*}$. Theorem A1, Theorem A2 and Remark \ref{rek 1.1}
imply that there exists a positive and radial nonincreasing sequence
$\{u_n\}\subset H_r^1(\mathbb{R}^N)\setminus \{0\}$ such that
\begin{equation} \label{e1.9}
I'_{p_n}(u_n)=0,\  I_{p_n}(u_n)=m_{p_n}\ \mathrm{and}\
P_{p_n}(u_n)=0.
\end{equation}

For such chosen sequence, we have the following result.

\begin{lemma}\label{lem xiajixian}
Assume that $N\geq 3$, (f1)-(f3) hold and  $\{u_n\}\subset
H_r^1(\mathbb{R}^N)\setminus \{0\}$ satisfies (\ref{e1.9}). Then
$\{u_n\}$ is bounded in $H^1(\mathbb{R}^N)$ and $\liminf_{n\to
\infty}m_{p_n}>0$.
\end{lemma}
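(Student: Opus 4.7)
The plan splits into two parts: (i) $H^{1}$-boundedness of $\{u_n\}$ and (ii) the positivity of $\liminf m_{p_n}$. Both will flow from the same identity obtained by combining $I_{p_n}(u_n)=m_{p_n}$ with $P_{p_n}(u_n)=0$, together with uniform Hardy--Littlewood--Sobolev and Sobolev estimates.

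First, writing $G_{p}(s):=\mu|s|^{p}+F(s)$, I would solve $P_{p_n}(u_n)=0$ for the nonlocal term and substitute into the energy $I_{p_n}(u_n)$. A short computation yields
\[
m_{p_n}=I_{p_n}(u_n)=\frac{\alpha+2}{2(N+\alpha)}\,\|\nabla u_n\|_{2}^{2}+\frac{\alpha\kappa}{2(N+\alpha)}\,\|u_n\|_{2}^{2}.
\]
This is the key formula. Combining it with the upper bound $\limsup_{n}m_{p_n}\le m_{p^*}<\infty$ provided by Lemma \ref{lem1.5}, and noting that the right-hand side is comparable to $\|u_n\|_{H^{1}(\mathbb{R}^{N})}^{2}$ (using $\kappa>0$), one obtains boundedness of $\{u_n\}$ in $H^{1}(\mathbb{R}^{N})$ at once.

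For the lower bound, I would return to $P_{p_n}(u_n)=0$ in the opposite direction:
\[
\frac{N-2}{2}\,\|\nabla u_n\|_{2}^{2}+\frac{N\kappa}{2}\,\|u_n\|_{2}^{2}=\frac{N+\alpha}{2}\int_{\mathbb{R}^{N}}\bigl(I_{\alpha}\ast G_{p_n}(u_n)\bigr)G_{p_n}(u_n).
\]
The right-hand side I would bound above by a uniform-in-$p_n$ polynomial expression in $\|u_n\|_{H^{1}(\mathbb{R}^{N})}$: expand $G_{p_n}(u_n)^{2}$ into the three pieces $|u_n|^{2p_n}$, $|u_n|^{p_n}F(u_n)$, $F(u_n)^{2}$; estimate each by Lemma \ref{lem HLS}; control $\|F(u_n)\|_{2N/(N+\alpha)}$ through (f2), interpolating $|F(s)|\lesssim|s|^{p_*}+|s|^{p^*}$ as in (\ref{e1.62}); and control $\||u_n|^{p_n}\|_{2N/(N+\alpha)}$ by splitting $|u_n|^{p_n}\le\tfrac{p^{*}-p_n}{p^{*}-p_{*}}|u_n|^{p_{*}}+\tfrac{p_n-p_{*}}{p^{*}-p_{*}}|u_n|^{p^{*}}$ via (\ref{e1.60}). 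Each resulting $L^{q}$-norm is evaluated at an exponent inside $[2,2N/(N-2)]$, so by Lemma \ref{lem1.7} it is dominated by $\|u_n\|_{H^{1}(\mathbb{R}^{N})}$ with a constant independent of $p_n$. This produces a bound of the shape
\[
\|u_n\|_{H^{1}(\mathbb{R}^{N})}^{2}\le C\sum_{i}\|u_n\|_{H^{1}(\mathbb{R}^{N})}^{2q_{i}},
\]
where every exponent satisfies $2q_{i}\ge 2p_{*}>2$. Since $u_n\not\equiv 0$, dividing through by $\|u_n\|_{H^{1}(\mathbb{R}^{N})}^{2}$ forces a uniform lower bound $\|u_n\|_{H^{1}(\mathbb{R}^{N})}\ge c_{0}>0$. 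Plugging this back into the first identity gives $m_{p_n}\ge c_{1}>0$ and hence $\liminf_{n\to\infty}m_{p_n}>0$.

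The main technical point to watch is that every constant appearing in the HLS and Sobolev estimates must be chosen independently of $p_n$, since $p_n$ varies and converges to $p^{*}$; this is precisely why Lemma \ref{lem1.7} is available and why the Young-type splitting (\ref{e1.60}) is useful, reducing all arguments to finitely many fixed exponents $p_{*}$ and $p^{*}$. Aside from this uniformity issue, the remaining work is a routine manipulation of the Pohožaev identity.
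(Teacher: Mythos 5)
Your proof is correct and follows essentially the same strategy as the paper: it uses $P_{p_n}(u_n)=0$ to rewrite $m_{p_n}=I_{p_n}(u_n)-\frac{1}{N+\alpha}P_{p_n}(u_n)$ as a positive combination of $\|\nabla u_n\|_2^2$ and $\|u_n\|_2^2$, invokes Lemma~\ref{lem1.5} for the upper bound to obtain $H^1$-boundedness, then uses $P_{p_n}(u_n)=0$ together with Hardy--Littlewood--Sobolev, (f2), the Young splitting~(\ref{e1.60}), and the uniform Sobolev constant of Lemma~\ref{lem1.7} to force $\|u_n\|_{H^1}\ge c_0>0$, and finally feeds this back into the identity. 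The only cosmetic difference is that you carry out the Young-type splitting of $|u_n|^{p_n}$ explicitly before applying HLS, whereas the paper keeps a $\|u_n\|_{H^1}^{2p_n}$ term directly; both give the same uniform lower bound.
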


\begin{proof}
By Lemma \ref{lem1.5}, for $n$ large enough, we have
\begin{equation}\label{e1.11}
\begin{split}
m_{p^*}+1\geq m_{p_n}&=I_{p_n}(u_n)-\frac{1}{N+\alpha}P_{p_n}(u_n)\\
                     &=\left(\frac{1}{2}-\frac{N-2}{2(N+\alpha)}\right)\int_{\mathbb{R}^N}|\nabla
                     u_n|^2+\left(\frac{1}{2}-\frac{N}{2(N+\alpha)}\right)\int_{\mathbb{R}^N}\kappa|u_n|^2,
\end{split}
\end{equation}
which implies that $\{u_n\}$ is bounded in $H^1(\mathbb{R}^N)$.

In view of (\ref{e1.60})-(\ref{e1.61}), and by the Cauchy
inequality, there exists $C_3,\ C_4>0$ independent of $n$, such that
\begin{equation*}
\begin{split}
0=P_{p_n}(u_n)&=\frac{N-2}{2}\int_{\mathbb{R}^N}|\nabla u_n|^2
+\frac{N}{2}\int_{\mathbb{R}^N}\kappa|u_n|^2\\
&\quad\quad
-\frac{N+\alpha}{2}\int_{\mathbb{R}^N}\left(I_{\alpha}\ast
(\mu|u_n|^{p_n}+F(u_n))\right)(\mu|u_n|^{p_n}+F(u_n))\\
&\geq
C_3\|u_n\|_{H^1(\mathbb{R}^N)}^{2}-C_4\left(\|u_n\|_{H^1(\mathbb{R}^N)}^{2p_n}+\|u_n\|_{H^1(\mathbb{R}^N)}^{2\frac{N+\alpha}{N}}+\|u_n\|_{H^1(\mathbb{R}^N)}^{2\frac{N+\alpha}{N-2}}\right),
\end{split}
\end{equation*}
which implies that there exists $C_5>0$ independent of $n$, such
that
\begin{equation}\label{e1.12}
\|u_n\|_{H^1(\mathbb{R}^N)}\geq C_5.
\end{equation}
Combining (\ref{e1.11}) and (\ref{e1.12}), we obtain that
$\liminf_{n\to \infty}m_{p_n}>0$.
\end{proof}

By Lemma \ref{lem1.5} and \ref{lem xiajixian}, we have $m_{p^*}>0$.
In the following, we give an upper estimate of $m_{p^*}$.

\begin{lemma}\label{lem shangjie}
Assume that $N\geq 3$ and (f1)-(f4) hold. Then
$$m_{p^*}<\frac{2+\alpha}{2(N+\alpha)}\left(\frac{N-2}{N+\alpha}\right)^{\frac{N-2}{2+\alpha}}\mu^{-\frac{2(N-2)}{2+\alpha}}S_\alpha^{\frac{N+\alpha}{2+\alpha}}.$$
\end{lemma}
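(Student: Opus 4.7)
The plan is to exhibit a test function $v\in H^1(\mathbb{R}^N)\setminus\{0\}$ with $P_{p^*}(v)=0$ whose energy $I_{p^*}(v)$ lies strictly below the stated threshold $M^*$. The choice of $v$ is dictated by the following observation: $M^*$ is \emph{precisely} the maximum of $\varphi(\tau)=\frac{\tau^{N-2}}{2}\|\nabla u\|_2^2-\frac{\mu^2\tau^{N+\alpha}}{2}\int(I_\alpha\ast u^{p^*})u^{p^*}$ over $\tau>0$ when $u$ is a minimizer of $S_\alpha$. Indeed, a one-line Lagrange-multiplier calculation (analogous to Lemma~\ref{lem1.2}) gives $\tau_*^{2+\alpha}=\frac{(N-2)A}{(N+\alpha)C}$ for the maximizer of $\frac{\tau^{N-2}}{2}A-\frac{\tau^{N+\alpha}}{2}C$, and substituting $A=\|\nabla U\|_2^2,\ C=\mu^2\int(I_\alpha\ast U^{p^*})U^{p^*}$ at an $S_\alpha$-extremal reproduces $M^*$ exactly. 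Hence the strict inequality must come entirely from the interaction between the added positive term $\kappa|u|^2$ (which raises the energy) and the additional subcritical piece $F(u)$ inside the Riesz integral (which lowers it).

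Concretely, let $U(x)=c_N(1+|x|^2)^{-(N-2)/2}$ be a normalized $S_\alpha$-extremal and set $U_\epsilon(x)=\epsilon^{-(N-2)/2}U(x/\epsilon)$. Fix a radial cutoff $\eta\in C_c^{\infty}(\mathbb{R}^N)$ with $\eta\equiv 1$ near the origin, and put $u_\epsilon:=\eta U_\epsilon\in H^1(\mathbb{R}^N)$. Standard bubble asymptotics give
\[
\|\nabla u_\epsilon\|_2^2=\|\nabla U\|_2^2+O(\epsilon^{N-2}),\qquad \int(I_\alpha\ast u_\epsilon^{p^*})u_\epsilon^{p^*}=\int(I_\alpha\ast U^{p^*})U^{p^*}+O(\epsilon^{N}),
\]
while $\|u_\epsilon\|_2^2$ is of size $\epsilon^2$, $\epsilon^2|\log\epsilon|$, or $\epsilon$ for $N\geq 5$, $N=4$, $N=3$ respectively. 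By Lemma~\ref{lem1.2}, there is a unique $\tau_\epsilon>0$ with $P_{p^*}((u_\epsilon)_{\tau_\epsilon})=0$, and (\ref{e1.35}) together with (\ref{e1.36}) gives $m_{p^*}\leq\max_{\tau>0}\varphi_\epsilon(\tau)$, where $\varphi_\epsilon$ is as in (\ref{e1.36}) with $G=\mu|u_\epsilon|^{p^*}+F(u_\epsilon)$.

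Expanding the Riesz integral separates $\varphi_\epsilon(\tau)$ into a pure critical piece (whose max is $M^*+o(1)$ as $\epsilon\to 0$, by the opening observation), plus $\frac{\kappa\tau^N}{2}\|u_\epsilon\|_2^2$, minus $\mu\tau^{N+\alpha}\int(I_\alpha\ast u_\epsilon^{p^*})F(u_\epsilon)$, minus a nonnegative $F^2$-term. A first-order perturbation at the baseline maximizer reduces matters to the strict ratio
\[
\mu\int(I_\alpha\ast u_\epsilon^{p^*})F(u_\epsilon)\ \gg\ \tfrac{\kappa}{2}\|u_\epsilon\|_2^2\qquad\text{as }\epsilon\to 0.
\]
Using the scaling identity $(I_\alpha\ast u_\epsilon^{p^*})(x)=\epsilon^{-(N-\alpha)/2}(I_\alpha\ast U^{p^*})(x/\epsilon)$, so that $I_\alpha\ast u_\epsilon^{p^*}$ is comparable to $\epsilon^{-(N-\alpha)/2}$ on $B_\epsilon$, together with the lower bound $u_\epsilon\gtrsim\epsilon^{-(N-2)/2}$ on $B_\epsilon$, hypothesis (f4) provides (via its ``$\to+\infty$'' clause, which allows $F(s)\geq M s^\gamma$ for arbitrarily large $M$ once $s$ is large enough) a pointwise estimate $F(u_\epsilon)\geq M u_\epsilon^\gamma$ on $B_\epsilon$, with $\gamma=(N+\alpha-4)/(N-2)$, $\alpha/(N-2)$, or $1+\alpha$ in dimensions $N\geq 5$, $N=4$, $N=3$. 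A direct change of variable confirms that these three exponents are precisely the ones making $\int(I_\alpha\ast u_\epsilon^{p^*})F(u_\epsilon)$ and $\|u_\epsilon\|_2^2$ share the same order in $\epsilon$, so the $M$-factor from (f4) produces the desired strict domination.

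Combining, $\max_\tau\varphi_\epsilon(\tau)<M^*$ for all sufficiently small $\epsilon$, giving $m_{p^*}<M^*$. The main obstacle is the final integral estimate: the bound $F(u_\epsilon)\geq M u_\epsilon^\gamma$ holds only where $u_\epsilon$ is large, so one must restrict to the concentration ball $B_\epsilon$, control the tails of the Riesz convolution carefully (especially in dimensions $N=3$ and $N=4$, where the tails dictate $\|u_\epsilon\|_2^2$), and verify dimension-by-dimension that the three distinct growth conditions in (f4) match the three distinct scaling orders of $\|u_\epsilon\|_2^2$ — which explains why (f4) must take three different forms.
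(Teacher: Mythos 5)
Your proposal takes essentially the same route as the paper: truncated Aubin--Talenti bubbles as test functions, the algebraic identity equating the threshold $M^*$ with the maximum of the pure-critical fibering map $\tau\mapsto\frac{\tau^{N-2}}{2}\|\nabla u\|_2^2-\frac{\mu^2\tau^{N+\alpha}}{2}\int(I_\alpha\ast u^{p^*})u^{p^*}$ at an $S_\alpha$-extremal, scaling optimization via Lemma \ref{lem1.2}, and the key observation that (f4) is calibrated so the cross Riesz term $\int(I_\alpha\ast u_\epsilon^{p^*})F(u_\epsilon)$ dominates $\kappa\|u_\epsilon\|_2^2$ and the cutoff errors, dimension by dimension. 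One small simplification worth noting: since all integrands involved are nonnegative, no delicate control of the Riesz-convolution tails is actually needed---restricting the double integral to $B_\epsilon\times B_\epsilon$, exactly as the paper does in (\ref{e1.30}), already produces the required lower bound, and the boundedness of the optimizing $\tau_\epsilon$ away from $0$ and $\infty$ (established in the paper by contradiction using $m_{p^*}>0$) is what lets one plug this lower bound in uniformly.
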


\begin{proof}
Let $\varphi \in C_c^{\infty}(\mathbb{R}^N)$ be a cut off function
satisfying: (a) $0\leq \varphi(x)\leq 1$ for any $x\in
\mathbb{R}^N$; (b) $\varphi(x)\equiv 1$ in $B_1$; (c)
$\varphi(x)\equiv 0$ in $\mathbb{R}^N\setminus \overline{B_2}$.
Here, $B_a$ denotes the ball in $\mathbb{R}^N$ of center at origin
and radius $a$. For any $\epsilon>0$, we define
$u_\epsilon(x)=\varphi(x)U_\epsilon(x)$, where
\begin{equation*}
U_\epsilon(x)=\frac{\left(N(N-2)\epsilon^2\right)^{\frac{N-2}{4}}}{\left(\epsilon^2+|x|^2\right)^{\frac{N-2}{2}}}.
\end{equation*}
By \cite{Brezis-Nirenberg 1983} (see also \cite{Willem 1996}), we
have the following estimates.
\begin{equation}\label{e1.18}
\int_{\mathbb{R}^N}|\nabla
u_\epsilon|^2=S^{\frac{N}{2}}+O(\epsilon^{N-2}),\ N\geq 3,
\end{equation}
\begin{equation}\label{e1.19}
\int_{\mathbb{R}^N}|
u_\epsilon|^{2N/(N-2)}=S^{\frac{N}{2}}+O(\epsilon^N),\ N\geq 3,
\end{equation}
and
\begin{equation}\label{e1.20}
\int_{\mathbb{R}^N}| u_\epsilon|^2=\left\{\begin{array}{ll}
K_2\epsilon^2+O(\epsilon^{N-2}),& N\geq 5,\\
K_2\epsilon^2|\ln \epsilon|+O(\epsilon^2),& N=4,\\
K_2\epsilon+O(\epsilon^2),& N=3,
\end{array}\right.
\end{equation}
where $K_1,\ K_2>0$, and
\begin{equation*}
S:=\inf_{ u\in
D^{1,2}(\mathbb{R}^N)\setminus\{0\}}\frac{\int_{\mathbb{R}^N}|\nabla
u|^2}{\left(\int_{\mathbb{R}^N}|u|^{\frac{2N}{N-2}}\right)^{\frac{N-2}{N}}}.
\end{equation*}
Moreover, similar as in \cite{Gao-Yang-1} and \cite{Gao-Yang-2}, by
direct computation,
\begin{equation}\label{e1.24}
\int_{\mathbb{R}^N}\left(I_\alpha\ast|u_\epsilon|^{p^*}\right)
|u_\epsilon|^{p^*}\geq (A_\alpha(N)
C_\alpha(N))^{\frac{N}2}S_\alpha^{\frac{N+\alpha}2}
+O(\epsilon^{\frac{N+\alpha}{2}}),
\end{equation}
where
\begin{equation*}
\begin{split}
S_\alpha:&=\inf_{ u\in
D^{1,2}(\mathbb{R}^N)\setminus\{0\}}\frac{\int_{\mathbb{R}^N}|\nabla
u|^2}{\left(\int_{\mathbb{R}^N}\left(I_{\alpha}\ast
|u|^{\frac{N+\alpha}{N-2}}\right)|u|^{\frac{N+\alpha}{N-2}}\right)^{\frac{N-2}{N+\alpha}}}\\
&=\frac{S}{(A_\alpha(N) C_\alpha(N))^{\frac{N-2}{N+\alpha}}},
\end{split}
\end{equation*}
$A_\alpha(N)$ is defined in (\ref{e1.37}) and $C_\alpha(N)$ is in
Lemma \ref{lem HLS}.

In the following, we use $u_\epsilon$ to estimate $m_{p^*}$. By
Lemma \ref{lem1.2}, there exists a unique $\tau_\epsilon$ such that
$P_{p^*}((u_\epsilon)_{\tau_\epsilon})=0$ and
$I_{p^*}((u_\epsilon)_{\tau_\epsilon})=\sup_{\tau\geq
0}I_{p^*}((u_\epsilon)_{\tau})$. Thus,
\begin{equation}\label{e1.17}
\begin{split}
m_{p^*}&\leq \sup_{\tau\geq 0}I_{p^*}((u_\epsilon)_{\tau})\\
       &=\sup_{\tau\geq 0}\left\{\frac{\tau^{N-2}}{2}\int_{\mathbb{R}^N}|\nabla
u_\epsilon|^2+\frac{\tau^{N}}{2}\int_{\mathbb{R}^N}\kappa|u_\epsilon|^2\right.\\
&\quad\quad\quad\quad
\left.-\frac{\tau^{N+\alpha}}{2}\int_{\mathbb{R}^N}\left(I_{\alpha}\ast
(\mu|u_\epsilon|^{p^*}+F(u_\epsilon))\right)(\mu|u_\epsilon|^{p^*}+F(u_\epsilon))\right\}
\end{split}
\end{equation}

We claim that there exist $\tau_0,\ \tau_1>0$ independent of
$\epsilon$ such that $\tau_\epsilon\in [\tau_0, \tau_1]$ for
$\epsilon>0$ small. Suppose by contradiction that $\tau_\epsilon\to
0$ or $\tau_\epsilon\to \infty$ as $\epsilon\to 0$. $F(s)\geq 0$,
(\ref{e1.18}), (\ref{e1.20}), (\ref{e1.24}) and (\ref{e1.17}) imply
that $m_{p^*}\leq  0$ as $\epsilon\to 0$, which contradicts
$m_{p^*}>0$. Thus, the claim holds.

Set
\begin{equation}\label{e1.25}
J_\epsilon(\tau)=\frac{\tau^{N-2}}{2}\int_{\mathbb{R}^N}|\nabla
u_\epsilon|^2-\frac{\tau^{N+\alpha}}{2}\mu^2\int_{\mathbb{R}^N}\left(I_{\alpha}\ast
|u_\epsilon|^{p^*}\right)|u_\epsilon|^{p^*}.
\end{equation}
In view of (\ref{e1.18}) and (\ref{e1.24}), and by direct
calculation, we have
\begin{equation}\label{e1.26}
\begin{split}
J_\epsilon(\tau)&\leq
\frac{2+\alpha}{2(N+\alpha)}\left(\frac{N-2}{N+\alpha}\right)^{\frac{N-2}{2+\alpha}}\mu^{-\frac{2(N-2)}{2+\alpha}}\frac{\left(\int_{\mathbb{R}^N}|\nabla
u_\epsilon|^2\right)^{\frac{N+\alpha}{2+\alpha}}}{\left(\int_{\mathbb{R}^N}\left(I_{\alpha}\ast
|u_\epsilon|^{p^*}\right)|u_\epsilon|^{p^*}\right)^{\frac{N-2}{2+\alpha}}}\\
&\leq \left\{\begin{array}{ll}
A\left(1+O(\epsilon^{N-2})+O(\epsilon^{\frac{N+\alpha}{2}})\right),& N\geq 4,\\
A\left(1+O(\epsilon)\right),& N=3,
\end{array}\right.
\end{split}
\end{equation}
where
\begin{equation*}
\begin{split}
A&=\frac{2+\alpha}{2(N+\alpha)}\left(\frac{N-2}{N+\alpha}\right)^{\frac{N-2}{2+\alpha}}\mu^{-\frac{2(N-2)}{2+\alpha}}\frac{S^{\frac{N(N+\alpha)}{2(2+\alpha)}}}{(A_\alpha(N)
C_\alpha(N))^{\frac{N(N-2)}{2(2+\alpha)}}S_\alpha^{\frac{(N+\alpha)(N-2)}{2(2+\alpha)}}}\\
&=\frac{2+\alpha}{2(N+\alpha)}\left(\frac{N-2}{N+\alpha}\right)^{\frac{N-2}{2+\alpha}}\mu^{-\frac{2(N-2)}{2+\alpha}}S_\alpha^{\frac{N+\alpha}{2+\alpha}}.
\end{split}
\end{equation*}

By the definition of $u_\epsilon$, for $|x|<\epsilon$ with
$\epsilon<1$, we have
\begin{equation}\label{e1.28}
u_\epsilon(x)\geq \left(\frac{N(N-2)}{4}\right)^{\frac{N-2}{4}}
\epsilon^{-\frac{N-2}{2}}.
\end{equation}
Condition (f4) implies that, for any $R>0$, there exists $S_R>0$
such that for $s\in[S_R,\infty)$,
\begin{equation}\label{e1.29}
F(s)\geq \left\{\begin{array}{ll}
R s^{\frac{N+\alpha-4}{N-2}},& N\geq 5,\\
R s^{\frac{\alpha}{N-2}}|\ln s|,& N=4,\\
R s^{1+\alpha},& N=3.
\end{array}\right.
\end{equation}
In view of $F(s)\geq 0$, (\ref{e1.28}) and (\ref{e1.29}), for any
$R>0$, there exists $\epsilon_0>0$ small enough such that for any
$\epsilon\in (0,\epsilon_0)$,
\begin{equation}\label{e1.30}
\begin{split}
\int_{\mathbb{R}^N}\left(I_{\alpha}\ast
|u_\epsilon|^{p^*}\right)F(u_\epsilon) &\geq
A_\alpha(N)\int_{B_\epsilon}\int_{B_\epsilon}\frac{1}{|x-y|^{N-\alpha}}|u_\epsilon(x)|^{p^*}F(u_\epsilon(y))dxdy\\
&\geq
A_\alpha(N)\int_{B_\epsilon}\int_{B_\epsilon}\frac{1}{|2\epsilon|^{N-\alpha}}\frac{\left(N(N-2)\epsilon^2\right)^{\frac{N+\alpha}{4}}}{\left(2\epsilon^2\right)^{\frac{N+\alpha}{2}}}F(u_\epsilon(y))dxdy\\
&\geq \left\{\begin{array}{ll}
CR \epsilon^{2},& N\geq 5,\\
CR \epsilon^{2}|\ln \epsilon|,& N=4,\\
CR \epsilon,& N=3,
\end{array}\right.
\end{split}
\end{equation}
where $C>0$ is a constant independent of $\epsilon$ and $R$.

Inserting (\ref{e1.20}), (\ref{e1.26}), (\ref{e1.30}) into
(\ref{e1.17}) and by choosing $R$ large enough, we get that
\begin{equation}\label{e1.31}
\begin{split}
m_{p^*}<\frac{2+\alpha}{2(N+\alpha)}\left(\frac{N-2}{N+\alpha}\right)^{\frac{N-2}{2+\alpha}}\mu^{-\frac{2(N-2)}{2+\alpha}}S_\alpha^{\frac{N+\alpha}{2+\alpha}}.
\end{split}
\end{equation}
\end{proof}

\textbf{Proof of Theorem \ref{thm1.1}}. Let $\{u_n\}\subset
H_r^1(\mathbb{R}^N)$ be a positive and radial nonincreasing sequence
which satisfies (\ref{e1.9}) with $p_n\to p^{*-}$ as $n\to \infty$.
Lemma \ref{lem xiajixian} shows that $\{u_n\}$ is bounded in
$H^1(\mathbb{R}^N)$. Thus, there exists a nonnegative  function
$u\in H_r^1(\mathbb{R}^N)$ such that up to a subsequence,
$u_n\rightharpoonup u$ weakly in $H^1(\mathbb{R}^N)$ and $u_n\to u$
a.e. in $\mathbb{R}^N$. By Lemma \ref{lem1.7}, (f2), $p_n\to p^{*-}$
and the H\"{o}lder inequality, we have
\begin{equation}\label{e1.63}
\{\mu|u_n|^{p_n}+F(u_n)\}\ \textrm{is\ bounded\ in}\
L^{\frac{2N}{N+\alpha}}(\mathbb{R}^N),
\end{equation}
$\{p_n\mu|u_n|^{p_n-2}u_n\}$ is bounded in
$L^{\frac{2Np^*}{(p^*-1)(N+\alpha)}}(\mathbb{R}^N)$,
$\{p_n\mu|u_n|^{p_n-2}u_n\psi\}$ is bounded in
$L^{\frac{2N}{N+\alpha}}(\mathbb{R}^N)$ and
$\{p^*\mu|u|^{p^*-2}u\psi\}\in
L^{\frac{2N}{N+\alpha}}(\mathbb{R}^N)$ for any $\psi\in
C_c^{\infty}(\mathbb{R}^N)$. By (\ref{e1.63}) and Lemma \ref{lem
weak}, we have
\begin{equation}\label{e1.65}
\mu|u_n|^{p_n}+F(u_n)\rightharpoonup  \mu|u|^{p^*}+F(u) \
\textrm{weakly\ in}\ L^{\frac{2N}{N+\alpha}}(\mathbb{R}^N).
\end{equation}
Since $\{u_n\}$ is bounded in $L^2(\mathbb{R}^N)\cap
L^{2N/(N-2)}(\mathbb{R}^N)$ and $|f(s)|\leq C(|s|^{p_*-1}+
|s|^{p^*-1})$, by Theorem A.4 in \cite{Willem 1996}, $\{f(u_n)\}$ is
bounded in $L^{\frac{2N}{\alpha}}(\mathbb{R}^N)+
L^{\frac{2N}{2+\alpha}}(\mathbb{R}^N)$. Then the H\"{o}lder
inequality implies that $\{f(u_n)\psi\}$ is bounded in
$L^{\frac{2N}{N+\alpha}}(\mathbb{R}^N)$ for any $\psi\in
C_c^{\infty}(\mathbb{R}^N)$. Similarly, $f(u)\psi\in
L^{\frac{2N}{N+\alpha}}(\mathbb{R}^N)$. By Remark \ref{rek1.3},
\begin{equation}\label{e1.64}
I_\alpha\ast(p^*\mu|u|^{p^*-2}u\psi+f(u)\psi)\in
L^{\frac{2N}{N-\alpha}}(\mathbb{R}^N).
\end{equation}
By (\ref{e1.65}) and (\ref{e1.64}),
\begin{equation*}
\begin{split}
\int_{\mathbb{R}^N}\left(I_{\alpha}\ast(\mu|u_n|^{p_n}+F(u_n))\right)(p^*\mu|u|^{p^*-2}u\psi+f(u)\psi)\\
\to\int_{\mathbb{R}^N}\left(I_{\alpha}\ast(\mu|u|^{p^*}+F(u))\right)(p^*\mu|u|^{p^*-2}u\psi+f(u)\psi).
\end{split}
\end{equation*}
It follows from $N\geq 3$ that $\frac{N}{\frac{N-2}2(p_*-1)}$ and
$\frac{N}{\frac{N-2}2(p^*-1)} \in (\frac{2N}{N+\alpha},\infty)$.
Since  $p_n\to p^{*-}$, $\psi \in L^{r}(\mathbb{R}^N)$ for $r\in
(1,\infty)$, by (f2), the Young inequality, the H\"{o}lder
inequality and Lemma \ref{lem symmet} with $t=2N/(N-2)$, there
exists a constant $C>0$ independent of $n$ such that
\begin{equation}\label{e1.45}
\begin{split}
\left||u_n|^{p_n-2}u_n\psi\right|,&\ \left|f(u_n)\psi\right|\leq
C\left(|u_n|^{p_*-1}|\psi|+|u_n|^{p^*-1}|\psi|\right)\\
&\leq
C\left(|x|^{\frac{2-N}2(p_*-1)}|\psi|+|x|^{\frac{2-N}2(p^*-1)}|\psi|\right)\in
L^{\frac{2N}{N+\alpha}}(\mathbb{R}^N).
\end{split}
\end{equation}
By the Hardy-Littlewood-Sobolev inequality and the Lebesgue
dominated convergence theorem,
\begin{equation*}
\begin{split}
&\left|\int_{\mathbb{R}^N}\left(I_{\alpha}\ast(\mu|u_n|^{p_n}+F(u_n))\right)(p_n\mu|u_n|^{p_n-2}u_n\psi+f(u_n)\psi)\right.\\
&\quad\quad\quad\left.-\int_{\mathbb{R}^N}\left(I_{\alpha}\ast(\mu|u_n|^{p_n}+F(u_n))\right)(p^*\mu|u|^{p^*-2}u\psi+f(u)\psi)\right|=o_n(1).
\end{split}
\end{equation*}
Thus, for any $\psi\in C_c^{\infty}(\mathbb{R}^N)$,
\begin{equation*}
\begin{split}
0=\langle I_{p_n}'(u_n),\psi\rangle &=\int_{\mathbb{R}^N}\nabla
u_n\nabla \psi
+\kappa u_n\psi\\
&\quad\quad -\int_{\mathbb{R}^N}\left(I_{\alpha}\ast(\mu|u_n|^{p_n}+F(u_n))\right)(p_n\mu|u_n|^{p_n-2}u_n\psi+f(u_n)\psi)\\
& \to \int_{\mathbb{R}^N}\nabla u\nabla \psi +\kappa
u\psi\\
&\quad\quad-\int_{\mathbb{R}^N}\left(I_{\alpha}\ast(\mu|u|^{p^*}+F(u))\right)(p^*\mu|u|^{p^*-2}u\psi+f(u)\psi)
\end{split}
\end{equation*}
as $n\to \infty$. That is, $u$ is a solution of (\ref{e1.16}).

We claim that $u\not\equiv 0$. Suppose by contradiction that
$u\equiv0$. Set $Q(s)=|s|^{\frac{N+\alpha}{N}}+
|s|^{\frac{N+\alpha}{N-2}}$. By (f1)-(f3), we have
\begin{equation*}
\lim_{|s|\to 0}\frac{F(s)}{Q(s)}=0 \quad \mathrm{and} \quad
\lim_{|s|\to \infty}\frac{F(s)}{Q(s)}=0.
\end{equation*}
Obviously, $\int_{\mathbb{R}^N}|Q(u_n(x))|^{\frac{2N}{N+\alpha}}\leq
C$ and $F(u_n(x))\to 0$ a.e. in $\mathbb{R}^N$. Moreover, Lemma
\ref{lem symmet} implies that $u_n(x)\to 0$ as $|x|\to \infty$
uniformly with respect to $n$. Then Lemma \ref{lem compact} implies
that $\int_{\mathbb{R}^N}|F(u_n(x))|^{\frac{2N}{N+\alpha}}\to 0$ as
$n\to \infty$. By Lemma \ref{lem1.7} and \ref{lem HLS}, we have
\begin{equation}\label{e1.40}
\int_{\mathbb{R}^N}(I_{\alpha}\ast F(u_n))F(u_n)\leq
C\|F(u_n)\|_{\frac{2N}{N+\alpha}}^2=o(1),
\end{equation}
\begin{equation}\label{e1.41}
\int_{\mathbb{R}^N}(I_{\alpha}\ast |u_n|^{p_n})F(u_n)\leq
C\|F(u_n)\|_{\frac{2N}{N+\alpha}}\|u_n\|_{\frac{2Np_n}{N+\alpha}}^{p_n}=o(1).
\end{equation}
In view of (\ref{e1.40}) and (\ref{e1.41}), and by using
$P_{p_n}(u_n)=0$ and the Young inequality
\begin{equation}\label{e1.14}
|u|^{p_n}\leq \frac{p^*-p_n}{p^*-p_*}|u|^{p_*}+
\frac{p_n-p_*}{p^*-p_*}|u|^{p^*},
\end{equation}
we get that
\begin{equation}\label{e1.13}
\begin{split}
&\int_{\mathbb{R}^N}|\nabla u_n|^2
+\frac{N}{N-2}\int_{\mathbb{R}^N}\kappa|u_n|^2\\
&\quad \quad
=\frac{N+\alpha}{N-2}\int_{\mathbb{R}^N}\left(I_{\alpha}\ast
(\mu|u_n|^{p_n}+F(u_n))\right)(\mu|u_n|^{p_n}+F(u_n))\\
&\quad
\quad=\frac{N+\alpha}{N-2}\mu^2\int_{\mathbb{R}^N}\left(I_{\alpha}\ast
|u_n|^{p_n}\right)|u_n|^{p_n}+o(1)\\
&\quad \quad\leq
p^*\mu^2\left(\frac{p_n-p_*}{p^*-p_*}\right)^2\int_{\mathbb{R}^N}(I_{\alpha}\ast
|u_n|^{p^*})|u_n|^{p^*}+o(1)\\
&\quad \quad=p^*\mu^2\int_{\mathbb{R}^N}(I_{\alpha}\ast
|u_n|^{p^*})|u_n|^{p^*}+o(1)\\
&\quad \quad\leq p^*\mu^2\left(\frac{\int_{\mathbb{R}^N}|\nabla
u_n|^2}{S_\alpha}\right)^{p^*}+o(1)
\end{split}
\end{equation}
which implies that either $\|u_n\|_{H^1(\mathbb{R}^N)}\to 0$ or
$$\limsup_{n\to \infty}\|\nabla u_n\|_2^2\geq \left(p^*\mu^2
S_\alpha^{-p^*}\right)^{-\frac{N-2}{2+\alpha}}.$$ If
$\|u_n\|_{H^1(\mathbb{R}^N)}\to 0$, then (\ref{e1.11}) implies that
$m_{p_n}\to 0$, which contradicts Lemma \ref{lem xiajixian}.\\
If $\limsup_{n\to \infty}\|\nabla u_n\|_2^2\geq \left(p^*\mu^2
S_\alpha^{-p^*}\right)^{-\frac{N-2}{2+\alpha}}$, then
\begin{equation*}
\begin{split}
m_{p^*}&\geq \limsup_{n\to \infty}m_{p_n}\\
       &=\limsup_{n\to\infty}\left(I_{p_n}(u_n)-\frac{1}{N+\alpha}P_{p_n}(u_n)\right)\\
       &=\limsup_{n\to\infty}\left(\frac{1}{2}-\frac{N-2}{2(N+\alpha)}\right)\int_{\mathbb{R}^N}|\nabla
       u_n|^2+\left(\frac{1}{2}-\frac{N}{2(N+\alpha)}\right)\int_{\mathbb{R}^N}\kappa|
       u_n|^2\\
       &\geq \left(\frac{1}{2}-\frac{N-2}{2(N+\alpha)}\right)\left(p^*\mu^2
S_\alpha^{-p^*}\right)^{-\frac{N-2}{2+\alpha}}\\
&=\frac{2+\alpha}{2(N+\alpha)}\left(\frac{N-2}{N+\alpha}\right)^{\frac{N-2}{2+\alpha}}\mu^{-\frac{2(N-2)}{2+\alpha}}S_\alpha^{\frac{N+\alpha}{2+\alpha}},
\end{split}
\end{equation*}
which contradicts Lemma \ref{lem shangjie}. Thus $u\not\equiv0$. By
Theorem A.2, $P_{p^*}(u)=0$.

By the weakly lower semi-continuity of the norm, we have
\begin{equation*}
\begin{split}
m_{p^*}&\leq I_{p^*}(u)\\
       &=\left(I_{p^*}(u)-\frac{1}{N+\alpha}P_{p^*}(u)\right)\\
        &=\left(\frac{1}{2}-\frac{N-2}{2(N+\alpha)}\right)\int_{\mathbb{R}^N}|\nabla
       u|^2+\left(\frac{1}{2}-\frac{N}{2(N+\alpha)}\right)\int_{\mathbb{R}^N}\kappa|
       u|^2\\
       &\leq\liminf_{n\to\infty}\left(\frac{1}{2}-\frac{N-2}{2(N+\alpha)}\right)\int_{\mathbb{R}^N}|\nabla
       u_n|^2+\left(\frac{1}{2}-\frac{N}{2(N+\alpha)}\right)\int_{\mathbb{R}^N}\kappa|
       u_n|^2\\
       &=\liminf_{n\to\infty}\left(I_{p_n}(u_n)-\frac{1}{N+\alpha}P_{p_n}(u_n)\right) \\
       &=\liminf_{n\to\infty}m_{p_n}\leq \limsup_{n\to\infty}m_{p_n}\leq m_{p^*}.
\end{split}
\end{equation*}
Hence $I_{p^*}(u)=m_{p^*}$. By the definition of $m_{p^*}^g$, we
have $m_{p^*}^g\leq I_{p^*}(u)=m_{p^*}$, which combining with Remark
\ref{rek 1.1} show that $m_{p^*}^g=m_{p^*}=I_{p^*}(u)$. That is, $u$
is a nonnegative and radial groundstate solution of (\ref{e1.16}).
The strongly maximum principle implies that $u$ is positive. The
proof is complete.

\textbf{Acknowledgements.} This work was supported by the research
project of Tianjin education commission with the Grant no. 2017KJ173
``Qualitative studies of solutions for two kinds of nonlocal
elliptic equations" and the National Natural Science Foundation of
China (No. 11571187).



\end{document}